\newtheorem*{remark*}{Remark} 
\title[Cutoff for Inhomogeneous Nonlinear Recombination]{Cutoff Phenomenon for Inhomogeneous Nonlinear Recombination in Arbitrary Finite Product Spaces}
\author{Junho Kim, Insuk Seo}
\date{October 2, 2025}
\address{Insuk Seo: Department of Mathematical Sciences and Research Institute of Mathematics, Seoul National University, Republic of Korea. \\
e-mail: \texttt{insuk.seo@snu.ac.kr} }  
\address{Junho Kim: Department of Mathematical Sciences, Seoul National University, Republic of Korea. \\
e-mail: \texttt{jhkim29@snu.ac.kr} }  
\newtheorem{theorem}{Theorem}[section]
\newtheorem{lemma}[theorem]{Lemma}
\newtheorem{proposition}[theorem]{Proposition}
\newtheorem{corollary}[theorem]{Corollary}
\theoremstyle{definition}
\newtheorem{definition}[theorem]{Definition}
\theoremstyle{remark}
\newtheorem{remark}[theorem]{Remark}
\newcommand{\R}{\mathbb{R}}
\newcommand{\N}{\mathbb{N}}
\newcommand{\E}{\mathbb{E}}
\newcommand{\p}{\mathbb{P}}
\newcommand{\Prob}{\mathbb{P}}
\newcommand{\ind}{\mathbf{1}}
\newcommand{\TV}{\mathrm{TV}}
\newcommand{\qvec}{\mathbf{q}}
\newcommand{\fvec}{\mathbf{f}}
\newcommand{\Gvec}{\mathbf{G}}
\begin{document}
\maketitle

\begin{abstract}
In this article, we prove the cutoff phenomenon for a general class of the discrete-time nonlinear recombination models. This system models the evolution of a probability measure on a finite product space $S^n$ representing the state of spins on $n$ sites. Although its stationary distribution has a product structure, and its evolution is Markovian, the dynamics of the model is nonlinear. Consequently, the estimation of the \textit{mixing time} becomes a highly non-trivial task. The special case with two spins and homogeneous stationary measure was considered in Caputo, Labbé, and Lacoin [The Annals of Applied Probability 35:1164–1197, 2025], where the cutoff phenomenon for the mixing behavior has been verified. In this article, we extend this result to the general case with finite spins and inhomogeneous stationary measure by developing a novel algebraic representation for the density fluctuation of the system with respect to its stationary state.  
\end{abstract}

\section{Introduction}

In contrast to the well-developed mixing theory for linear Markov chains (e.g., \cite{LevinPeres2017} for a comprehensive introduction), the corresponding theory for the nonlinear Markovian dynamical systems, where the transition operator depends on the current distribution of the system, presents a significant challenge. The study of such convergence properties is very rare, e.g., \cite{Andrieu2011, Rabinovich1992}. In particular, mixing time analysis quantifying the rate of convergence to stationarity, a central and often challenging problem in Markovian dynamics, remains a nascent and demanding field for nonlinear Markov systems. We remark \cite{ CaputoSinclair2018, CaputoSinclair2023, Rabani1998} for possible references. 

A canonical example in this challenging domain is the nonlinear recombination model, which has its origins in the Hardy-Weinberg principle of population genetics~\cite{Geiringer1944, Hardy1908, Weinberg1908}. While the mixing time for this model was known to be of order $\Theta(\log n)$ from the work of Rabani, Rabinovich, and Sinclair~\cite{Rabani1998}, the cutoff phenomenon was first established in the seminal work of Caputo, Labbé, and Lacoin for the homogeneous two-spin system~\cite{CaputoLabbeLacoin2025}. They established a cutoff phenomenon, locating the sharp transition at time $\log_2 n$ in the discrete-time setting, and at $2 \log n$ in the continuous-time setting, both with an $O(1)$ window. A key aspect of their analysis was the derivation of an explicit convergence profile for monochromatic initial distributions, which enabled them to prove the asymptotic sharpness of their cutoff bounds.

However, the generalization of these results to systems with inhomogeneous marginals and on general product spaces remained unresolved, as noted in~\cite{CaputoLabbeLacoin2025}. The key obstacles were an algebraic framework fundamentally tied to the binary structure; the lack of a clear analogue to the monochromatic distribution for establishing the cutoff lower bound; and the loss of exchangeability in the inhomogeneous setting, which precluded deriving the explicit convergence profile needed to prove sharpness. 

This paper resolves these problems subject to a uniform nondegeneracy assumption by developing a general framework. The key to our approach is a tractable algebraic representation of the system's relative density, constructed from an orthonormal polynomial basis. This representation, along with comonotonic coupling as the non-homogeneous analogue of the monochromatic distribution, enables us to prove the cutoff phenomenon for arbitrary product spaces with inhomogeneous marginals; directly establish the asymptotic sharpness of the bounds without relying on an explicit convergence profile; and generalize the known convergence profile for the homogeneous case from two-spin systems to arbitrary finite state spaces.

The remainder of this paper is organized as follows. We first formally define the generalized model and state our main results in Section 2. We explain our main tools in Section 3, and then establish the sharp estimate on the upper and lower bounds of the mixing time in Sections 4 and 5, respectively. In Section 6, we specialize to the homogeneous case to derive the explicit convergence profile. We also present basic properties of the nonlinear model in the Appendix.  

\section{Notation and Main Results} 

This section formally defines the generalized nonlinear recombination model and presents the main theorems of this paper. We establish the existence of a cutoff phenomenon at time $\log_2 n$ for a broad class of product spaces and provide sharp, quantitative upper and lower bounds.

\subsection{The Model}

Now we explain the model considered in this article, which is a generalization of the model considered in \cite{CaputoLabbeLacoin2025}

\subsubsection*{State space} Let $S = \{s_0, s_1, \dots, s_{k-1}\} \subset \R$ be a set of $k \ge 2$ distinct real-valued spin states. The state space for a system of $n \in \N$ sites is the product space $\Omega_n = S^n$. We denote the set of coordinates by $[n] = \{1, \dots, n\}$.

\subsubsection*{Sequence of marginal distributions:} We define the space of nondegenerate single-site probability distributions on $S$ as:
    \[ 
    \mathcal{P} := \left\{ p: S \to (0,1) : \sum_{l=0}^{k-1} p(s_l) = 1 \right\}. 
    \]    
Then, the marginals for the system are given by an infinite sequence $\mathbf{p} = (p_1, p_2, \dots)$, where $p_i \in \mathcal{P}$ stands for the marginal at site $i$. Therefore, for an $n$-site system, the marginals on $[n]$ are $p^{(n)} = (p_1, \dots, p_n)$.

\subsubsection*{Space of initial distributions} We consider the set of all probability measures on $\Omega_n$ that respect the given marginals:
    \[ 
    \mathcal{P}^{(n)} := \left\{ \mu \in \mathcal{P}(\Omega_n) : \forall i \in [n], \text{ the } i\text{-th marginal of } \mu \text{ is } p_i \right\}. 
    \]

    Any initial distribution $\mu$ is assumed to belong to $\mathcal{P}^{(n)}$. While both $\mu$ and the evolved distribution $\mu_t$ depend on the system size $n$, we suppress this dependence in the notation for convenience.

\subsubsection*{Dynamics} The discrete-time evolution is defined by the initial state $\mu_0 = \mu$ and the recursion $\mu_t = \mu_{t-1} \circ \mu_{t-1}$ for $t \in \N$, where the operator $\circ$ is the averaged uniform recombination (or collision product) defined by:
    \[ 
    \nu_1 \circ \nu_2 = 2^{-n} \sum_{A \subseteq [n]} (\nu_1)_A \otimes (\nu_2)_{A^c},
    \]
    where $(\nu)_A$ denotes the marginal of a measure $\nu$ on the coordinate subset $A$.

  Let $T: \mathcal{P}(\Omega_n) \to \mathcal{P}(\Omega_n)$ be the one-step operator defined by $T(\mu) = \mu \circ \mu$. The system's dynamics can then be expressed as the repeated application of this operator:
\[
    \mu_t = T(\mu_{t-1}) = T^t(\mu_0),
\]
where $T^t$ denotes the $t$-fold composition of $T$.
The family of operators $\{T^t\}_{t \in \mathbb{N}_0}$ forms a discrete-time \textbf{nonlinear semigroup}, satisfying $T^0 = \mathrm{Id}$ and $T^{t+s} = T^t  T^s=T^s  T^t$. This evolution defines a \textbf{nonlinear Markov process}. Unlike a standard (linear) Markov chain where the distribution evolves via a fixed transition operator, here the dynamics explicitly depend on the current distribution $\mu_{t-1}$. Furthermore, in contrast to traditional linear Markov chains, this evolution is not defined at the level of single configurations but is rather an evolution on the space of probability distributions itself.

The operator $\circ$ models a process of sexual recombination, analogous to that in population genetics. The evolution from $\mu_{t-1}$ to $\mu_t$ can be understood through the following intuitive process:
\begin{enumerate}
    \item \textbf{Selection of Parents:} To form a new ``offspring'' configuration, two parent configurations are drawn independently from the population, each according to the distribution $\mu_{t-1}$.
    \item \textbf{Recombination:} For each coordinate $i\in[n]$, the offspring inherits the state (or ``spin'') at coordinate~$i$ from one of the two parents. The choice for each coordinate is made independently and uniformly at random between the two parents, so that all $2^n$ possible coordinate-wise inheritances are equally likely.
    \item \textbf{New Distribution:} The distribution $\mu_t$ is the law of the resulting offspring, averaged over all possible parent selections and recombination choices. Equivalently, for two measures $\nu_1,\nu_2\in\mathcal P(\Omega_n)$,
    \[
        \nu_1\circ\nu_2 \;=\; 2^{-n}\sum_{A\subset[n]} (\nu_1)_A\otimes(\nu_2)_{A^{c}},
    \]
    which is the formal expression of averaging over all $2^n$ ways to partition the coordinates between the two parents.
\end{enumerate}
    
\subsubsection*{Graphical construction} 

    The distribution $\mu_t$ at time $t$ admits a powerful graphical interpretation. It can be viewed as the law of the configuration at the root of a regular binary tree of depth $t$. This graphical representation is fundamental to our analysis. It provides the probabilistic framework for defining the random environment $\xi$ at the leaves of the tree, from which we derive the ``quenched" quantities that are central to all subsequent proofs.

    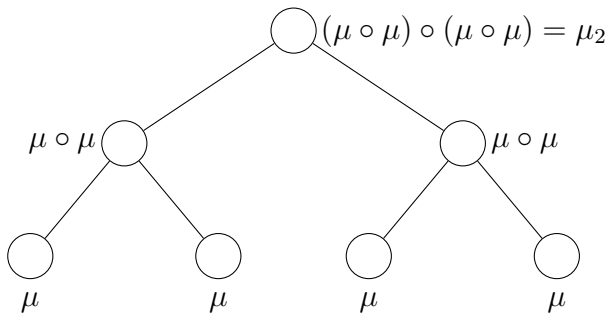
\begin{figure}[htbp]
    \centering
    \begin{tikzpicture}[
        level distance=1.5cm, 
        level 1/.style={sibling distance=4.5cm}, 
        level 2/.style={sibling distance=2.5cm}, 
        every node/.style={circle, draw, minimum size=6mm, inner sep=1pt} 
    ]

    \node[label=right:{$(\mu \circ \mu) \circ (\mu \circ \mu) = \mu_2$}] {}
        child { 
            node[label=left:{$\mu \circ \mu $}] {} 
             child { node[label=below:{$\mu$}] {} }
             child { node[label=below:{$\mu$}] {} }
         }
        child { 
            node[label=right:{$\mu \circ \mu $}] {}
            child { node[label=below:{$\mu$}] {} }
            child { node[label=below:{$\mu$}] {} }
    };

    \end{tikzpicture}
    \caption{The graphical representation of the distribution $\mu_t$ for the case $t=2$. Each internal node represents the collision product of its children's distributions, with the root yielding the final distribution $\mu_2$.} 
    \end{figure}

    Let $N = 2^t$. We consider $N$ independent random configurations $\xi = \{\xi(x) : x=1, \dots, N\}$, where each $\xi(x) \in \Omega_n$ is drawn from the initial distribution $\mu$. These configurations represent the leaves of the binary tree. Let $U_1, \dots, U_n$ be $n$ independent random variables, each uniformly distributed on the set of leaves $\{1, \dots, N\}$, and also independent of $\xi$.

    The configuration at the root, denoted $\sigma^* = (\sigma_1^*, \dots, \sigma_n^*)$, is constructed by assigning to each coordinate $i \in [n]$ the spin from a randomly chosen leaf:
\begin{equation}\label{eqn_xi}
        \sigma_i^* = \xi_i(U_i) \quad \text{for } i \in [n].
\end{equation}    
Note that since the leaf selections $\{U_i\}_{i \in [n]}$ are independent, the components $\sigma_i^*$ are conditionally independent given the environment $\xi$.
    \begin{lemma}
    \label{lem:graphical_construction}
    The law of the random configuration $\sigma^*$ is $\mu_t$. 
    \end{lemma}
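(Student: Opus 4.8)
The plan is to argue by induction on the depth $t$, exploiting the self-similar structure of the binary tree: a tree of depth $t$ decomposes at the root into two subtrees of depth $t-1$ built from disjoint families of leaves, and the collision product $\circ$ is precisely the operation that glues their root configurations together. For the base case $t=0$ there is a single leaf $\xi(1)\sim\mu$ and $U_i\equiv 1$, so $\sigma^*=\xi(1)\sim\mu=\mu_0$.

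For the inductive step, assume the claim for depth $t-1$. Split the $N=2^t$ leaves into the left block $L=\{1,\dots,2^{t-1}\}$ and the right block $R=\{2^{t-1}+1,\dots,2^t\}$. For each coordinate $i\in[n]$ rewrite the uniform leaf $U_i$ as $U_i=V_i$ on an event $\{B_i=0\}$ and $U_i=2^{t-1}+W_i$ on $\{B_i=1\}$, where $B_i$ is a fair coin and $V_i,W_i$ are uniform on $\{1,\dots,2^{t-1}\}$, with $\{B_i,V_i,W_i\}_{i\in[n]}$ and $\xi$ mutually independent; one checks directly that this reproduces the law of $(U_i)_{i\in[n]}$. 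Let $\sigma^{*,L}$ (resp.\ $\sigma^{*,R}$) be the root configuration built from the leaves in $L$ using the selections $(V_i)_i$ (resp.\ from $R$ using $(W_i)_i$). These are the root configurations of two depth-$(t-1)$ trees built from disjoint, independent families of leaves with independent selection variables, so $\sigma^{*,L}$ and $\sigma^{*,R}$ are independent and, by the induction hypothesis, each has law $\mu_{t-1}$.

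Now set $A=\{i\in[n]:B_i=0\}$, a uniformly random subset of $[n]$ independent of $(\sigma^{*,L},\sigma^{*,R})$. From the definitions, $\sigma_i^*=\sigma_i^{*,L}$ for $i\in A$ and $\sigma_i^*=\sigma_i^{*,R}$ for $i\in A^c$, so conditionally on $\{A=B\}$ the law of $\sigma^*$ is the product measure $(\mu_{t-1})_B\otimes(\mu_{t-1})_{B^c}$ on $S^B\times S^{B^c}\cong\Omega_n$. Averaging over $B$ with weight $2^{-n}$ yields
\[
\mathrm{Law}(\sigma^*)=2^{-n}\sum_{B\subseteq[n]}(\mu_{t-1})_B\otimes(\mu_{t-1})_{B^c}=\mu_{t-1}\circ\mu_{t-1}=T(\mu_{t-1})=\mu_t,
\]
which closes the induction.

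The argument is essentially bookkeeping. The only point requiring care is the recursive decomposition of the leaf-selection variables $U_i$ together with the verification that restricting the left (resp.\ right) subtree's root configuration to $A$ (resp.\ $A^c$) agrees with $\sigma^*$ there, and that the two subtree configurations are independent; these facts are exactly what make the conditional law of $\sigma^*$ factor as a tensor product and hence match the definition of $\circ$. I do not expect any substantive obstacle beyond stating this decomposition precisely.
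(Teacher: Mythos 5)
Your proof is correct, and it takes a genuinely different route from the paper's. The paper first establishes, by induction on $t$, the closed-form identity
\[
\mu_t = \frac{1}{N^n}\sum_{(A_1,\dots,A_N)} \mu_{A_1}\otimes\cdots\otimes\mu_{A_N},
\]
an average over all $N^n$ ordered partitions of $[n]$ into $N=2^t$ (possibly empty) blocks, and then identifies this expression with the law of $\sigma^*$ via the correspondence $A_x=\{i:U_i=x\}$ and an application of Fubini's theorem. You instead induct directly on the depth of the tree: you decompose each uniform leaf selector $U_i$ into a fair coin $B_i$ choosing a half of the leaf set plus a uniform selector within that half, observe that the two half-trees are independent depth-$(t-1)$ constructions with law $\mu_{t-1}$ by the inductive hypothesis, and recognize the random set $A=\{i:B_i=0\}$ as exactly the uniform recombination set in the definition of $\mu_{t-1}\circ\mu_{t-1}$. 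Your argument is more tightly coupled to the recursive definition $\mu_t=T(\mu_{t-1})$ and never needs the explicit $N$-fold partition sum; the paper's approach has the advantage of producing that explicit partition formula as a reusable byproduct (the same fragmentation-process picture reappears in the proof of Lemma~\ref{lem:stationary_measure}). All the steps you flag as requiring care --- the distributional decomposition of $U_i$, the independence of $\sigma^{*,L}$ and $\sigma^{*,R}$, and the independence of $A$ from the pair --- are handled correctly, so there is no gap.
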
 

    The proof is essentially identical to the one presented in \cite[Section 2.1]{CaputoLabbeLacoin2025}, and therefore is postponed to Appendix. 

\subsubsection*{Stationary state and convergence}

Denote by $\pi = \bigotimes_{i=1}^n p_i$ the product measure on $[n]$ whose marginal coincides with $p^{(n)}$. Note that, while $\pi=\pi_n$ depends on the system size $n$, we suppress this dependence in the notation for convenience.
 We first show that $\pi$ is the stationary measure for the dynamics defined above. 
    
    \begin{lemma}
    \label{lem:stationary_measure}
      $\pi$ is the unique stationary measure for the dynamics.  In particular, $\mu_t$ converges to $\pi$ as $t\to\infty$ for all $\mu\in \mathcal{P}^{(n)}$.
    \end{lemma}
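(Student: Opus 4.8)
The plan is to exploit the graphical construction from Lemma~\ref{lem:graphical_construction} together with the conditional independence it furnishes. The proof splits into three parts: (i) $\pi$ is stationary, i.e.\ $T(\pi)=\pi$; (ii) $\mu_t\to\pi$ for every $\mu\in\mathcal P^{(n)}$; (iii) uniqueness of the stationary measure, which follows immediately from (ii).

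\textbf{Stationarity.} Since $\pi=\bigotimes_{i=1}^n p_i$ is a product measure, for any coordinate subset $A\subseteq[n]$ the marginal $(\pi)_A=\bigotimes_{i\in A}p_i$ and $(\pi)_{A^c}=\bigotimes_{i\in A^c}p_i$, so $(\pi)_A\otimes(\pi)_{A^c}=\pi$ for \emph{every} $A$. Plugging into the definition of $\circ$,
\[
\pi\circ\pi \;=\; 2^{-n}\sum_{A\subseteq[n]}(\pi)_A\otimes(\pi)_{A^c}\;=\;2^{-n}\sum_{A\subseteq[n]}\pi\;=\;\pi .
\]
Hence $T(\pi)=\pi$. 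One should also check $\pi\in\mathcal P^{(n)}$, which is immediate since its $i$-th marginal is $p_i$; and note the semigroup preserves $\mathcal P^{(n)}$ (the collision product of two measures with marginals $p^{(n)}$ again has marginals $p^{(n)}$, since for coordinate $i$ the offspring spin is $\xi_i(U_i)$ with $\xi_i(x)\sim p_i$).

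\textbf{Convergence.} Fix $\mu\in\mathcal P^{(n)}$ and use the leaf representation at time $t$ with $N=2^t$: $\sigma_i^*=\xi_i(U_i)$, the $\{U_i\}_{i\in[n]}$ i.i.d.\ uniform on $[N]$ and independent of $\xi=\{\xi(x)\}_{x=1}^N$ i.i.d.\ from $\mu$. I would show $\mathbb P(\sigma^*\in\cdot\,)\to\pi$ by a second-moment/collision argument: for any fixed configuration $\omega=(\omega_1,\dots,\omega_n)\in\Omega_n$,
\[
\mathbb P(\sigma^*=\omega)\;=\;\mathbb E\Big[\prod_{i=1}^n \widehat\xi_i(\omega_i)\Big],\qquad \widehat\xi_i(\omega_i):=\tfrac1N\sum_{x=1}^N \mathbf 1\{\xi_i(x)=\omega_i\},
\]
where the empirical frequency $\widehat\xi_i(\omega_i)$ has mean $p_i(\omega_i)$. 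The fluctuation of $\widehat\xi_i(\omega_i)$ around $p_i(\omega_i)$ is $O(N^{-1/2})$ in $L^2$, but the $\widehat\xi_i$ for different $i$ are \emph{not} independent (they share the same leaves $\xi$). Still, expanding $\prod_i\widehat\xi_i(\omega_i)=\prod_i\big(p_i(\omega_i)+(\widehat\xi_i(\omega_i)-p_i(\omega_i))\big)$ and taking expectations, every cross term contains at least one factor of the form $\mathbb E[\prod_{i\in B}(\widehat\xi_i(\omega_i)-p_i(\omega_i))]$ with $|B|\ge1$; a direct computation using the i.i.d.\ leaf structure shows each such term is $O(N^{-1})=O(2^{-t})$, uniformly in $\omega$. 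Hence $\mathbb P(\sigma^*=\omega)=\pi(\omega)+O(2^{-t})$ for each $\omega$, and since $\Omega_n$ is finite this gives $\|\mu_t-\pi\|_{\TV}\to0$ as $t\to\infty$. (Alternatively, since $\Omega_n$ is finite and $T$ is continuous, one can argue via compactness that any subsequential limit of $\mu_t$ is a fixed point whose marginals are $p^{(n)}$, then identify it; but the quantitative estimate above is cleaner and anticipates the finer bounds of later sections.)

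\textbf{Uniqueness.} If $\nu$ is any stationary measure for the dynamics, then $\nu\in\mathcal P^{(n)}$ is forced: writing $\nu=T(\nu)=\nu\circ\nu$ and taking the $i$-th marginal, $(\nu\circ\nu)_i$ equals the law of $\xi_i(U_i)$ with $\xi(x)\sim\nu$, which is just the $i$-th marginal of $\nu$ — so this is automatically consistent, and one instead pins down the marginals by noting that the collision product symmetrizes: actually the cleanest route is to observe that $T^t(\nu)=\nu$ for all $t$, while part (ii) shows $T^t(\nu)\to\pi$, forcing $\nu=\pi$. This also yields the ``in particular'' clause. \emph{Main obstacle.} The only genuinely delicate point is the convergence step: handling the dependence among the empirical marginals $\widehat\xi_1,\dots,\widehat\xi_n$ induced by the shared leaves. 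The key observation that resolves it is that, although dependent, these empirical measures are built from i.i.d.\ leaf samples, so any joint centered moment decays like a negative power of $N=2^t$; making this bound uniform in the target configuration $\omega$ (and, if desired, tracking its dependence on $n$) is where the real work lies, though for the qualitative statement of this lemma a crude bound suffices.
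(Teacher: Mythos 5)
Your proposal is correct, and the stationarity and uniqueness parts coincide with the paper's argument (the paper likewise restricts attention to measures whose marginals are preserved by the dynamics, so the subtlety you flag about $\nu\in\mathcal P^{(n)}$ is resolved the same way: $T$ preserves marginals, hence $T^t(\nu)\to\pi$ applies and forces $\nu=\pi$). The convergence step, however, is where you genuinely diverge. The paper runs a coupling/conditioning argument on the graphical construction: it tracks the fragmentation process of random partitions of $[n]$, observes that conditionally on every pair of coordinates having been separated by time $t$ (equivalently, on the $U_i$ landing on distinct leaves), the root configuration has law exactly $\pi$, and then bounds $\|\mu_t-\pi\|_{\TV}\le \p(\tau_{\mathrm{frag}}>t)\le\binom{n}{2}2^{-t}$ by a union bound over pairs, each pair failing to separate with probability $2^{-t}$. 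Your route instead expands $\prod_i\widehat\xi_i(\omega_i)$ around the means and controls joint centered moments of the empirical marginals; this is sound — the $|B|=1$ terms vanish exactly, and for $|B|\ge2$ the singleton-leaf-index terms drop out so each moment is $O(N^{-\lceil |B|/2\rceil})$ — and for fixed $n$ it delivers the same qualitative conclusion. The paper's pairwise-separation argument is shorter and yields the explicit $\binom{n}{2}2^{-t}$ bound with no moment bookkeeping; your moment expansion is essentially an early version of the orthonormal density-expansion machinery the paper deploys in Sections~3--4 to get the sharp $(k-1)n2^{-t}$ bound, so it anticipates the later analysis at the cost of being heavier than necessary for this qualitative lemma.
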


The proof of this lemma is postponed to the appendix as well. 

The main concern of the current article is the quantification of the convergence described in the previous lemma. 

\subsubsection*{Distance to stationarity}
    To quantify the convergence of the system's distribution over time, we employ a standard metric for probability measures. For any two probability measures $\mu$ and $\nu$ on the state space $\Omega_n$, the total variation distance between them is defined as
    \[
    \|\mu - \nu\|_{\text{TV}} := \sup_{A \subset \Omega_n} |\mu(A) - \nu(A)|,
    \]
    where the supremum is taken over all measurable subsets of $\Omega_n$. For a discrete model as in our case, it is well known (e.g., \cite[Section 4.1]{LevinPeres2017}) that the total-variation distance is a half the $L_1$ distance between the probability mass functions:
    \[
    \|\mu - \nu\|_{\text{TV}} = \frac{1}{2} \sum_{\sigma \in \Omega_n} |\mu(\sigma) - \nu(\sigma)|.
    \]
  
Now, The distance to stationarity at time $t$ is measured by the total variation distance:
    \[ 
    D_n(\mu, t) = \|\mu_t - \pi\|_{\TV}. 
    \]

    The worst-case distance is the supremum over the space of initial distributions:
    \[ 
    D_n(t) = \sup_{\mu \in \mathcal{P}^{(n)}} D_n(\mu, t). 
    \]

\subsection{The Cutoff Phenomenon}
Our main result establishes that the generalized nonlinear recombination system exhibits a cutoff phenomenon at time $t = \log_2 n + O(1)$ for any choice of nondegenerate marginals under the following uniform nondegeneracy condition. 
\medskip 

\noindent \textbf{Assumption 1.} There exists $\delta >0$ such that $ p_i (s)\in [\delta, 1-\delta]$ for all $i\in\mathbb{N}$ and $s\in S$.
\medskip

We remark that this uniform nondegeneracy assumption is immediate if all $p_i$, $i\in\mathbb{N}$ are identical. We, from this moment on, assume Assumption 1 throughout the remainder of the article. 

\begin{theorem}[Cutoff Phenomenon]
\label{thm:cutoff_phenomenon}
For any $\lambda \in \R$, define the time $t_n(\lambda) = \lfloor\log_2 n + \lambda\rfloor$. The worst-case distance satisfies:
\begin{align*}
    \lim_{\lambda \to \infty} \limsup_{n \to \infty} D_n(t_n(\lambda)) &= 0, \\
    \lim_{\lambda \to -\infty} \liminf_{n \to \infty} D_n(t_n(\lambda)) &= 1.
\end{align*}
\end{theorem}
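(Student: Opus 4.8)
\medskip

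\noindent\textbf{Overall plan.} I will treat the two bounds with complementary tools. For the upper bound I will pass to the relative density $\tfrac{d\mu_t}{d\pi}-1$ and use the graphical construction of Lemma~\ref{lem:graphical_construction} to write $\mu_t$ as a \emph{mixture of product measures}; expanding the fluctuation in a tensor basis of orthonormal polynomials reveals that the first-order terms cancel, and the leading surviving (pairwise) term is of size $O((n/N)^2)$ with $N=2^t$, which vanishes precisely once $t>\log_2 n+\lambda$ with $\lambda$ large. For the lower bound I will take $\mu$ to be the comonotone coupling of $p_1,\dots,p_n$ --- the natural substitute for the monochromatic law --- and exhibit a linear test statistic whose variance under $\mu_t$ exceeds that under $\pi$ by a factor $\asymp 2^{-\lambda}$ whenever $t<\log_2 n+\lambda$, then upgrade this to a total-variation gap via a quenched central limit theorem.

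\medskip

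\noindent\textbf{Upper bound.} By $\|\mu_t-\pi\|_{\TV}\le\tfrac12\bigl\|\tfrac{d\mu_t}{d\pi}-1\bigr\|_{L^2(\pi)}$ it suffices to show
\[
\lim_{\lambda\to\infty}\ \limsup_{n\to\infty}\ \sup_{\mu\in\mathcal{P}^{(n)}}\ \Bigl\|\tfrac{d\mu_t}{d\pi}-1\Bigr\|_{L^2(\pi)}=0,\qquad t=t_n(\lambda),\ N=2^{t}.
\]
By Lemma~\ref{lem:graphical_construction} the quenched law of $\sigma^*$ given the leaf environment $\xi$ is the product measure $\bigotimes_{i=1}^n\widehat p_i^{\,\xi}$, where $\widehat p_i^{\,\xi}(s)=\tfrac1N\#\{x:\xi_i(x)=s\}$ is the empirical $i$-th marginal over the $N$ leaves, so $\mu_t=\E_\xi\bigl[\bigotimes_i\widehat p_i^{\,\xi}\bigr]$. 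Fixing for each site $i$ an orthonormal basis $\chi_i^{(0)}\equiv1,\chi_i^{(1)},\dots,\chi_i^{(k-1)}$ of $L^2(p_i)$ and writing $\chi_{\mathbf a}=\prod_i\chi_i^{(a_i)}$, one gets
\[
\frac{d\mu_t}{d\pi}-1=\sum_{\mathbf a:\,|\mathrm{supp}(\mathbf a)|\ge2}\widehat\phi_t(\mathbf a)\,\chi_{\mathbf a},\qquad \widehat\phi_t(\mathbf a)=\E_\xi\Bigl[\prod_{i\in\mathrm{supp}(\mathbf a)}c_i^\xi(a_i)\Bigr],\quad c_i^\xi(a)=\tfrac1N\sum_{x=1}^N\chi_i^{(a)}(\xi_i(x)),
\]
the crucial point being that the coefficients with $|\mathrm{supp}(\mathbf a)|\le1$ \emph{vanish} because $c_i^\xi(a)$ is an empirical average of mean-zero i.i.d.\ variables; this is what makes the bound work and it is the feature the ``algebraic representation'' of the introduction is designed to exploit. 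Taking two independent environments $\xi,\xi'$, Parseval gives
\[
\Bigl\|\tfrac{d\mu_t}{d\pi}-1\Bigr\|_{L^2(\pi)}^2=\sum_{m\ge2}\ \sum_{|A|=m}\ \E_{\xi,\xi'}\Bigl[\prod_{i\in A}\gamma_i\Bigr],\qquad \gamma_i:=\langle g_i^\xi,g_i^{\xi'}\rangle_{p_i},\quad g_i^\xi:=\tfrac{d\widehat p_i^{\,\xi}}{dp_i}-1.
\]
The $m=2$ term equals $N^{-2}\sum_{i<j}\|\Phi_{ij}\|_{\mathrm{HS}}^2$, where $\Phi_{ij}(a,b)=\E_\mu[\chi_i^{(a)}\chi_j^{(b)}]$ is the matrix (in these bases) of the site-$i$/site-$j$ correlation operator of $\mu$; since that operator is a conditional-expectation contraction on $L^2$, one has $\|\Phi_{ij}\|_{\mathrm{op}}\le1$ and hence $\|\Phi_{ij}\|_{\mathrm{HS}}^2\le k-1$ \emph{uniformly in $\mu$}, so the $m=2$ term is $\le(k-1)\binom n2 N^{-2}$. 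As $N=2^{t_n(\lambda)}\ge2^{\lambda-1}n$, this is $O(4^{-\lambda})$. For $m\ge3$ I would expand each $\widehat\phi_t(\mathbf a)$ over set partitions of $\mathrm{supp}(\mathbf a)$ into blocks of size $\ge2$, observe that the all-pairs partitions dominate (every other partition loses a factor of at least $1/N$), and bound the resulting tensor-network contractions again by $\|\Phi_{ij}\|_{\mathrm{op}}\le1$; this gives an $m$-th order bound of the form $O\bigl((C(k,\delta)\,n/N)^{m}\bigr)$, whose geometric series (summable once $\lambda$ is large enough that $C(k,\delta)\,n/N<1$) is $O(4^{-\lambda})$, uniformly in $n$ and $\mu$.

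\medskip

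\noindent\textbf{Lower bound.} Let $\mu$ be the comonotone coupling: with $V$ uniform on $[0,1]$ and $F_i$ the c.d.f.\ of $p_i$, set $\sigma_i=F_i^{-1}(V)$, so $\mu\in\mathcal P^{(n)}$. Let $h_i$ be the standardized spin at site $i$ ($\E_{p_i}h_i=0$, $\mathrm{Var}_{p_i}h_i=1$, $h_i$ nondecreasing in the spin value) and $W=\sum_{i=1}^n h_i(\sigma_i)$; under $\pi$ the coordinates are independent, so $\mathrm{Var}_\pi(W)=n$. Under $\mu_t$, condition on the leaf-assignment $U=(U_1,\dots,U_n)$: it partitions $[n]$ into blocks (coordinates sharing a leaf), within a block $\sigma^*$ is a comonotone configuration, distinct blocks are independent, and $\E[W\mid U]=0$, so
\[
\mathrm{Var}_{\mu_t}(W)=n+\E_U\!\!\sum_{\text{blocks }B}\ \sum_{\substack{i,j\in B\\ i\ne j}}\rho_{ij},\qquad \rho_{ij}:=\mathrm{Corr}_{\mathrm{com}}\!\bigl(h_i(\sigma_i),h_j(\sigma_j)\bigr).
\]
Comonotone variables are positively correlated, and under Assumption~1 I can get the \emph{uniform} lower bound $\rho_{ij}\ge c(\delta)>0$ by conditioning on $\{V<\delta\}$ and $\{V>1-\delta\}$ --- each of probability $\ge\delta$ --- on which every $\sigma_i$ sits at its minimum, resp.\ maximum, value (here $p_i(s)\ge\delta$ for all $s$ forces $F_i^{-1}$ to be locally constant at the ends). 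Since $\E_U\bigl[\sum_B|B|(|B|-1)\bigr]=n(n-1)/N$ and $N\le2^{\lambda}n$, this yields $\mathrm{Var}_{\mu_t}(W)\ge n+c(\delta)n(n-1)/N\ge\tfrac12 c(\delta)\,2^{-\lambda}n$ for $n$ large. To turn this into a total-variation gap, I would check that, conditionally on $U$ and on the high-probability event that $\sum_B|B|(|B|-1)$ is close to its mean, the sum $W=\sum_B\bigl(\sum_{i\in B}h_i(\sigma_i)\bigr)$ of independent block contributions obeys a Lyapunov CLT --- the maximal block size is $O(\log n)$, negligible against $\sqrt{\mathrm{Var}(W\mid U)}$ --- so that $W/\sqrt{\mathrm{Var}_{\mu_t}(W)}$ is asymptotically standard normal. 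Then, choosing $R=R(\lambda)$ with $R\to\infty$ but $R=o(2^{-\lambda})$, the event $\{W^2>Rn\}$ has $\Prob_{\mu_t}$-probability tending to $1$ while $\Prob_\pi(W^2>Rn)\le1/R$ by Chebyshev, whence $\liminf_{n\to\infty}D_n(t_n(\lambda))\ge1-o_\lambda(1)$; letting $\lambda\to-\infty$ gives the claim.

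\medskip

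\noindent\textbf{Main obstacle.} The hard part is the uniform (in $n$ \emph{and} in $\mu\in\mathcal P^{(n)}$) control of the higher-order terms $m\ge3$ in the $L^2$ expansion: the number of set partitions of $[m]$ grows super-exponentially, so a crude triangle-inequality estimate over partitions is not summable, and one must use the precise structure of $\widehat\phi_t(\mathbf a)$ --- the interplay of the combinatorial weights $(N)_{|\mathcal B|}/N^{|A|}$ with the contraction bounds $\|\Phi_{ij}\|_{\mathrm{op}}\le1$ on the correlation operators --- to extract the geometric decay. This is exactly the point where the novel algebraic representation is needed, and its $\mu$-uniformity is what makes the \emph{worst-case} statement $D_n(\cdot)=\sup_\mu D_n(\mu,\cdot)$ accessible. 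A secondary, more routine, technical step is the quenched CLT together with the concentration of the conditional variance $\mathrm{Var}_{\mu_t}(W\mid U)$ used in the lower bound.
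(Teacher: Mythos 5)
Your proposal is a direct proof of the cutoff statement, whereas the paper deduces Theorem~\ref{thm:cutoff_phenomenon} in a few lines from the quantitative bounds of Theorems~\ref{thm:upper_bound} and~\ref{thm: discrete lb}; your two halves therefore play the role of those theorems, and both take genuinely different routes. For the upper bound, the paper works with the \emph{quenched} density $h_t^\xi$, subtracts its linear part, and combines an $L^1$ and an $L^2$ bound via $\min(2+\sqrt a,\sqrt{e^a-a-1})\le 2a$ to get the clean estimate $D_n(t)\le(k-1)n2^{-t}$ valid for all $n,t$. You instead bound the \emph{annealed} density in $L^2(\pi)$ via Parseval in the tensor ONB; the degree-$\le1$ coefficients vanish because $\E_\xi[q_m^\xi(i)]=0$, the $m=2$ term is controlled by the operator-norm bound on the correlation matrices, and the $m\ge3$ terms by the pairing-dominance partition count. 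This is essentially the machinery the paper deploys in Section~\ref{subsec:ub_sharp_proof} (the bounds $\E_\pi[Q_d^2]=O(s^d)$ and the summability of $b_l$), so the geometric decay you flag as the main obstacle does go through with the crude bound $|f_m^i|\le\delta^{-1/2}$, uniformly in $\mu$ — but only in the regime $n/N$ small, i.e.\ $\lambda$ large. That suffices for the cutoff limit but does not recover Theorem~\ref{thm:upper_bound} for all $s$, which is what the paper's quenched $\min$ trick buys. For the lower bound, the paper partitions $[n]$ into $\Theta(s)$ baskets of size $C_0 2^{t_n}$ with block-wise comonotonic initial data and counts high-magnetization blocks via Paley–Zygmund and Chernoff, which yields the quantitative rate $1-2e^{-cs}$; you use a single global comonotonic coupling and a single statistic $W=\sum_i f_1^i(\sigma_i)$, separating $\pi$ from $\mu_t$ through the variance inflation $\mathrm{Var}_{\mu_t}(W)\gtrsim c(\delta)2^{-\lambda}n$ plus a conditional Lyapunov CLT and Chebyshev. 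This is simpler and gives the qualitative limit $\lambda\to-\infty$, at the cost of the explicit exponential rate.

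Two steps of your sketch deserve care when written out. First, the uniform bound $\rho_{ij}\ge c(\delta)$ needs more than ``conditioning on $\{V<\delta\}$ and $\{V>1-\delta\}$'': the contributions from the middle of $[0,1]$ must be shown not to cancel the endpoint contributions, which is done via the double-integral (Hoeffding-type) representation of the covariance of two nondecreasing functions of a common uniform — exactly Lemma~\ref{lem:uniform_cov}. Second, the quenched CLT requires concentration of the conditional variance $\mathrm{Var}(W\mid U)$ at scale $o(2^{-\lambda}n)$ (e.g.\ by bounded differences in the $U_i$, using that the maximal block size is $O(\log n)$); both are routine but must be stated.
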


This theorem is a direct consequence of the following quantitatively sharp bounds on the worst-case distance

\begin{theorem}
\label{thm:upper_bound}
Let $(t_n)_{n\in\mathbb{N}}$ be a sequence of integers such that $\lim_{n\to\infty} n 2^{-t_n} = s > 0$. Then, there exists a constant $c=c(k,\delta) > 0$ such that
\begin{equation}
\label{ub}
cs\le \liminf_{n\to\infty} D_n(t_n)\le \limsup_{n \to \infty} D_n(t_n) \le (k-1)s,
\end{equation}
where the left-most inequality holds only for $s \in (0, s_0)$ for some constant $s_0=s_0(k,\delta) > 0$. 
\end{theorem}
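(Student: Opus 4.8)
The plan is to analyze the total variation distance via the graphical construction of Lemma~\ref{lem:graphical_construction}, writing $\mu_t$ as the law of $\sigma^*$ where $\sigma_i^* = \xi_i(U_i)$ and the coordinates are conditionally independent given the leaf environment $\xi$. The key observation is that conditionally on $\xi$, the measure $\mu_t$ is a product measure $\bigotimes_i q_i^\xi$ whose $i$-th marginal is the empirical distribution of the spins $\{\xi_i(x) : x \in [N]\}$ sampled through the uniform choice $U_i$; since $\E[q_i^\xi] = p_i$ and the spins at distinct leaves are i.i.d., the fluctuation $q_i^\xi - p_i$ is, coordinatewise, a centered average of $N = 2^t$ i.i.d.\ bounded terms, hence of typical size $N^{-1/2} = 2^{-t/2}$. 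When $n 2^{-t} \to s$, we have $n \cdot (2^{-t/2})^2 \to s$, so the total ``relative density fluctuation'' $\sum_i (q_i^\xi/p_i - 1)^2$ is of order $s$; this is the scaling that drives both bounds. For the upper bound I would use the triangle inequality $\|\mu_t - \pi\|_\TV \le \E_\xi \|\bigotimes_i q_i^\xi - \pi\|_\TV$ (valid because $\mu_t$ is a mixture over $\xi$ and $\pi$ is not), and then bound the TV distance between two product measures by the sum of marginal TV distances: $\|\bigotimes_i q_i^\xi - \bigotimes_i p_i\|_\TV \le \sum_i \|q_i^\xi - p_i\|_\TV = \frac12 \sum_i \sum_{l} |q_i^\xi(s_l) - p_i(s_l)|$. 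Taking expectations and using $\E|q_i^\xi(s_l)-p_i(s_l)| \le (\mathrm{Var})^{1/2} \le (p_i(s_l)(1-p_i(s_l))/N)^{1/2} \le (4N)^{-1/2}$ over $k$ values of $l$ and $n$ coordinates gives $D_n(t_n) \lesssim \frac{k}{2} n (4N)^{-1/2}$, which is $O(n 2^{-t_n/2})$ — but this is the wrong order, $\sqrt{s}$ not $s$. To get the sharp $(k-1)s$ I need the second-order cancellation: since $\sum_l (q_i^\xi(s_l) - p_i(s_l)) = 0$, the leading $N^{-1/2}$ terms in the $L_1$ norm partially cancel, and one should instead expand to second order. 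Concretely, $\|\bigotimes_i q_i^\xi - \pi\|_\TV = \E_\pi\big[(d\mu_t^\xi/d\pi - 1)_+\big]$ and $d\mu_t^\xi/d\pi - 1 = \prod_i (q_i^\xi/p_i)(\sigma_i) - 1 = \sum_i \varphi_i(\sigma_i) + \text{higher order}$ where $\varphi_i = q_i^\xi/p_i - 1$ is centered under $p_i$; the linear term $\sum_i \varphi_i(\sigma_i)$ is a sum of independent centered variables with total variance $\sum_i \|\varphi_i\|_{L^2(p_i)}^2 \to$ (something of order $s$), so by a CLT/Gaussian approximation $\E_\pi(\sum_i \varphi_i(\sigma_i))_+ \approx \sqrt{\mathrm{Var}/2\pi}$, again $\sqrt{s}$. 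The resolution is that $\E_\xi$ of this is what matters and one must track that $\mathrm{Var} = \sum_i \|\varphi_i^\xi\|^2$ is itself random of order $2^{-t}\sum_i(\cdot) \sim$ small; in the regime $s$ small the fluctuations are tiny and $(x)_+ \le |x|$ combined with the fact that $\E_\pi|\sum_i\varphi_i(\sigma_i)| \le \sqrt{\sum_i \E_\pi \varphi_i^2}$ and then $\E_\xi$ of that square is controlled — I would instead argue that for small total fluctuation the TV distance of a product measure from $\pi$ is $\le \frac12 \E_\pi|d\mu_t^\xi/d\pi - 1| + o$, and bound $\E_\xi \E_\pi |\prod(1+\varphi_i) - 1|$ using independence to get $\E_\xi\E_\pi|{\cdot}| \le (k-1)\E_\xi \sum_i (\text{something}) = (k-1)s + o(1)$ via a careful first-moment computation where the factor $k-1$ comes from $\E_\pi|q_i^\xi/p_i - 1|$ summed appropriately over the $k-1$ free parameters of each marginal. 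This first-moment route, done carefully with the exact variance $\mathrm{Var}(q_i^\xi(s_l)) = p_i(s_l)(1-p_i(s_l))/N$ and Cauchy–Schwarz, yields the clean constant $(k-1)$.

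For the lower bound, the plan is to exhibit a specific initial distribution $\mu$ and a specific test function (statistic) that distinguishes $\mu_t$ from $\pi$ with probability bounded below. Following the paper's stated strategy, I would use the \emph{comonotonic coupling} as the analogue of the monochromatic distribution: order $S = \{s_0 < s_1 < \dots < s_{k-1}\}$ and let $\mu$ be the law of $(X_1, \dots, X_n)$ where $X_i = F_i^{-1}(V)$ for a single uniform $V$ on $[0,1]$ and $F_i$ the CDF of $p_i$ — so all coordinates are perfectly monotonically coupled through one common randomness $V$. Under this $\mu$, the leaf configurations $\xi(x)$ are each comonotonic, and the induced conditional marginals $q_i^\xi$ at the root are strongly correlated across $i$ in a way that creates a macroscopic bias in a linear statistic such as $W = \sum_i w_i \ind(\sigma_i^* = s_{l(i)})$ or more simply $W = \sum_i a_i \sigma_i^*$ for suitable weights $a_i$ (chosen so that $\E_\pi W = 0$ and $\mathrm{Var}_\pi W$ is normalized). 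The point is that under $\mu_t$ the statistic $W$ has an extra variance contribution of order $n 2^{-t} \to s$ coming from the correlated environment — specifically $\mathrm{Var}_{\mu_t}(W) - \mathrm{Var}_\pi(W) \asymp s$ — because the comonotone structure makes the environment-induced covariances $\mathrm{Cov}(q_i^\xi, q_j^\xi)$ all of one sign and of order $N^{-1} = 2^{-t}$. Then a second-moment / Chebyshev–Paley–Zygmund argument shows that for $s$ in a small interval $(0, s_0)$ the laws of $W$ under $\mu_t$ and under $\pi$ are separated in TV by at least $c s$: one compares the two distributions of $W$ (both approximately Gaussian with the same mean but different variances differing by $\Theta(s)$) and uses that two mean-zero Gaussians with variances $1$ and $1 + \Theta(s)$ have TV distance $\Theta(s)$ for small $s$. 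This gives $D_n(t_n) \ge \|\mathcal{L}_{\mu_t}(W) - \mathcal{L}_\pi(W)\|_\TV \ge cs$.

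The main obstacle, and where most of the work lies, is making the Gaussian/CLT approximations rigorous and uniform in $n$ for the lower bound, together with correctly identifying the sign and size of the environment covariance under the comonotone coupling. For the lower bound one must (i) compute $\mathrm{Cov}_{\mu_t}(\sigma_i^*, \sigma_j^*) = \E_\xi[\mathrm{Cov}(\text{leaf empirical means})] + \mathrm{Cov}_\xi(\E[\cdot|\xi])$ and show the leading term is $\Theta(2^{-t})$ with a definite sign, which uses that two coordinates at the root share a common leaf with probability $\Theta(1/N)$ and under comonotonicity sharing a leaf induces positive association of the spins; (ii) control the fourth moments of $W$ to justify that $W$ is genuinely close to Gaussian (a Berry–Esseen bound for the conditional-on-$\xi$ sum, plus a concentration estimate for the random variance $\sum_i(\cdot)$ over $\xi$); and (iii) verify that the difference in variances, not swamped by fluctuations of the conditional variance, is what survives — this forces the restriction to small $s < s_0$, since for large $s$ the linear-statistic heuristic breaks and higher-order terms in the density expansion contribute. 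On the upper bound side the subtlety is purely the correct constant: the naive $\sum_i \|q_i^\xi - p_i\|_\TV$ bound after taking $\E_\xi$ must be evaluated to leading order as $\E_\xi \sum_i \frac12\sum_l |q_i^\xi(s_l)-p_i(s_l)|$, and one checks $\E_\xi|q_i^\xi(s_l)-p_i(s_l)| \to 0$ pointwise while the sum over $i$ telescopes against $n2^{-t} \to s$; the factor $(k-1)$ arises because, using $\sum_l(q_i^\xi(s_l)-p_i(s_l))=0$, the $L^1$ norm $\sum_l|q_i^\xi(s_l)-p_i(s_l)| = 2\sum_{l\in\text{positive part}}(\cdot)$ has at most $k-1$ positive terms, and a uniform bound on each positive excess times the count gives the clean $(k-1)s$ ceiling after the environment average and a dominated-convergence / uniform-integrability step justified by Assumption~1.
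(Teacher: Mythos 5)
Your upper-bound argument has a genuine gap: neither of the two routes you sketch can produce a bound of order $s$, and the cancellation you invoke to rescue them is the wrong one. Summing marginal total-variation distances gives $\E_\xi\sum_i\|\hat p_i^\xi-p_i\|_{\TV}=\Theta(nN^{-1/2})=\Theta(\sqrt{ns})\to\infty$, and your fallback, bounding $\E_\xi\E_\pi|h_t^\xi-1|$, is the \emph{quenched} $L^1$ fluctuation, which is genuinely of order $\sqrt{(k-1)s}$ (the degree-one part $\sum_{i,m}q_m^\xi(i)f_m^i(\sigma_i)$ has $L^2(\pi)$-norm $\sqrt{A_\xi}$ with $\E_\xi[A_\xi]=(k-1)n2^{-t}$, and for a typical $\xi$ its $L^1$-norm is comparable to its $L^2$-norm), so $\sqrt{s}\gg s$ and the claimed $(k-1)s$ is unreachable this way. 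The within-site identity $\sum_l(\hat p_i^\xi(s_l)-p_i(s_l))=0$ that you propose as the source of cancellation does not reduce the per-site fluctuation below $\Theta(N^{-1/2})$ and is not where the factor $k-1$ comes from. The cancellation that actually matters is over the \emph{environment}: $\E_\xi[q_m^\xi(i)]=0$ for $m\ge1$, so the entire degree-one chaos vanishes from the annealed density $\E_\xi[h_t^\xi]$. The paper exploits this by subtracting the linear terms \emph{before} applying Jensen, i.e.\ it introduces $\hat h_t^\xi=h_t^\xi-\sum_{i,m}q_m^\xi(i)f_m^i(\sigma_i)$, which satisfies $\E_\xi[\hat h_t^\xi]=\E_\xi[h_t^\xi]$, and then bounds $\E_\xi\|\hat h_t^\xi-1\|_{L^1(\pi)}$; the surviving fluctuation is purely of degree $\ge2$, hence of size $O(A_\xi)$ rather than $O(\sqrt{A_\xi})$, and $\E_\xi[A_\xi]=(k-1)n2^{-t}$ gives the constant $k-1$ (one factor $2^{-t}$ per nonconstant basis function per site). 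Without this step your argument cannot close.

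For the sharpness lower bound $\liminf D_n(t_n)\ge cs$, your choice of the globally comonotonic initial distribution matches the paper, but your route differs: you project onto a one-dimensional linear statistic $W$ and compare two near-Gaussian laws with variances differing by $\Theta(s)$, whereas the paper works directly with the chaos decomposition of the annealed density, showing $\E_\pi[Q_2^2]=\Omega(s^2)$ via Lemma~\ref{lem:uniform_cov}, controlling $\E_\pi[Q_2^4]$ and the degree-$\ge3$ tail, and applying Paley--Zygmund to $Q_2$. Your CLT route is plausible (it is essentially a one-dimensional, inhomogeneous version of the profile argument of Section~\ref{sec:explicit profile}) and would avoid the combinatorial bookkeeping of the higher chaoses, but as written it is only a sketch: you would still need a quantitative normal approximation for the conditionally independent sum given $\xi$, concentration of the random conditional mean and variance, and a verification that the variance gap $\rho s$ is not swallowed by the approximation errors. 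The obstacles you list in your final paragraph are the right ones, but none of them is carried out.
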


Thus, in the asymptotic regime where $n2^{-t_n} \to s$ as $n\to\infty$ for a sufficiently small constant $s>0$, the worst-case distance $D_n(t_n)$ is of order $\Theta(s)$.

\begin{remark} For the bound \eqref{ub}, we shall prove a stronger inequality $D_n(t) \le (k-1)n 2^{-t}$ for all $n$ and $t$.
\end{remark}

The previous theorem is suitable for handling the situation when $s>0$ is small. We next introduce another result which enables us to handle the case when $s>0$ is large. 

\begin{theorem} 
\label{thm: discrete lb}
Let $(t_n)_{n\in\mathbb{N}}$ be a sequence of integers such that $\lim_{n\to\infty} n 2^{-t_n} = s > 0$. Then, there exists a constant $c=c(k,\delta)>0$ such that
\begin{equation}
\label{lb}
1 - 2e^{-cs} \le \liminf_{n \to \infty}D_n(t_n) \le \limsup_{n \to \infty} D_n(t_n) \le 1-\frac{1}{2}e^{-2(k-1)s},
\end{equation}
where the right-most inequality holds only for $s \ge \frac{\log2}{2(k-1)}$. 
\end{theorem}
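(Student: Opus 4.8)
The plan is to use the graphical construction (Lemma \ref{lem:graphical_construction}) together with a carefully chosen initial distribution $\mu$, and to extract total variation bounds from a single real-valued test statistic. Fix $N = 2^{t_n}$ leaves. Under $\mu_{t_n}$, the root configuration $\sigma^*$ satisfies $\sigma_i^* = \xi_i(U_i)$, so conditionally on the leaf environment $\xi$ the coordinates are independent. For the lower bound one wants an initial distribution that creates the strongest possible long-range correlation among the leaves; the natural candidate is the \emph{comonotonic coupling}, the analogue of the monochromatic measure: choose a single uniform $V \in [0,1]$ and set the $i$-th coordinate of each leaf configuration to be $F_i^{-1}(V)$, where $F_i$ is the CDF of $p_i$ on the ordered state space $S$. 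Then every leaf is \emph{the same} in the sense that all leaves use the same quantile $V$, and the marginals are exactly $p_i$, so $\mu \in \mathcal P^{(n)}$.

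\medskip

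For such $\mu$, I would track the statistic $W(\sigma) = \sum_{i=1}^n g_i(\sigma_i)$ for a suitable bounded centered function $g_i$ (e.g. a first-order orthonormal polynomial from the basis mentioned in the introduction, normalized so $\mathbb E_{p_i} g_i = 0$, $\mathbb E_{p_i} g_i^2 = 1$). Under $\pi$ the summands are i.i.d.-like and independent, so $W$ is approximately Gaussian with variance $n$ by a CLT/Lindeberg argument (Assumption 1 gives uniform control of moments). Under $\mu_{t_n}$, condition on $\xi$: then $\mathbb E[W \mid \xi] = \sum_i \bar g_i(\xi)$ where $\bar g_i(\xi) = N^{-1}\sum_{x} g_i(\xi_i(x))$ is the leaf-average, and $\mathrm{Var}(W\mid\xi) \le n$. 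The key computation is the law of $M := \mathbb E[W\mid\xi] = \sum_i \bar g_i(\xi)$: because of the comonotonic coupling, $\bar g_i(\xi)$ is a sum of $N$ i.i.d. terms all driven through site $i$, and across sites the $g_i(F_i^{-1}(V))$ are positively correlated via the common $V$; a direct second-moment computation gives $\mathrm{Var}(M) \asymp n^2/N = s n$ (the cross terms survive precisely because of comonotonicity), so $M$ has typical size of order $\sqrt{s}\,\sqrt n$. Thus under $\mu_{t_n}$ the statistic $W$ is a Gaussian of standard deviation $\sqrt n$ whose mean is itself random of standard deviation $\sqrt{s n}$; separating the two laws by comparing $W/\sqrt n$ to a standard Gaussian yields, via a standard two-Gaussian total-variation estimate (or Hellinger/Pinsker), the bound $D_n(t_n) \ge 1 - 2e^{-c s}$ after optimizing the threshold. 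This is the left inequality of \eqref{lb}.

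\medskip

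For the right inequality $\limsup_n D_n(t_n) \le 1 - \tfrac12 e^{-2(k-1)s}$, I would instead bound the total variation from above uniformly over $\mu \in \mathcal P^{(n)}$. Write $D_n(\mu,t) = 1 - \sum_\sigma \mu_t(\sigma)\wedge\pi(\sigma) \le 1 - \mathbb P_{\mu_t}(A) + \mathbb P_\pi(A^c)$ type estimates are too lossy; instead use the graphical representation to get a lower bound on the overlap $\int \mu_t \wedge \pi$. The point is that with probability bounded below (over the environment $\xi$ and the leaf choices) all $n$ coordinates are chosen from \emph{distinct} leaves, i.e. the map $i \mapsto U_i$ is injective; this event has probability $\prod_{j=0}^{n-1}(1 - j/N) \ge \exp(-\sum_{j} \tfrac{j}{N-j})$, which is $\ge \exp(-(1+o(1)) n^2/(2N)) \to e^{-s^2/2}$... — one must be more careful: the correct comparison is with the number of coordinates, and the useful event is that the $U_i$ land in leaves whose $\xi$-values make $\sigma^*$ distributed like a product measure. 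I would argue that conditionally on $\{U_i\}$ being injective, $\sigma^*$ restricted to those coordinates is a product of single-leaf marginals, and then use the entropy/relative-density representation from Section 3 to show the resulting law is within the claimed overlap of $\pi$, the factor $2(k-1)$ and the constraint $s \ge \frac{\log 2}{2(k-1)}$ coming from requiring the exponent to keep the overlap below $\tfrac12$. Concretely I expect the clean statement to be $\int \mu_t\wedge\pi \ge \tfrac12 e^{-2(k-1)n2^{-t}}$ for all $n,t$, proved by induction on $t$ or directly from the upper-bound machinery of Theorem \ref{thm:upper_bound} (indeed $D_n(t)\le (k-1)n2^{-t}$ already gives something, but the exponential form needs the sharper overlap bound).

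\medskip

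The main obstacle is the lower-bound variance computation for $M = \sum_i \bar g_i(\xi)$ under the comonotonic coupling: one must verify that the cross-site covariances $\mathrm{Cov}(g_i(F_i^{-1}(V)), g_j(F_j^{-1}(V)))$ are \emph{uniformly bounded below} in a suitable averaged sense so that $\mathrm{Var}(M)$ is genuinely of order $sn$ rather than $o(sn)$ — this is where Assumption 1 and the choice of the $g_i$ (matching the sign structure of the comonotonic coupling, essentially taking $g_i$ increasing) are essential, and where the inhomogeneity is most delicate since one cannot invoke exchangeability. A secondary obstacle is making the two-Gaussian separation quantitative with the right constant $c(k,\delta)$ uniformly in $n$; I would handle this by a Berry–Esseen bound for both the quenched law of $W$ given $\xi$ and the annealed law of $M$, with all error terms controlled by $\delta$ via Assumption 1.
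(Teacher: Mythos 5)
There is a genuine gap in both halves of your argument. For the lower bound, the globally comonotonic initial distribution combined with a single global test statistic $W=\sum_i g_i(\sigma_i)$ cannot produce the rate $1-2e^{-cs}$. As you compute, under $\pi$ the law of $W/\sqrt n$ is approximately $\mathcal N(0,1)$, while under $\mu_{t_n}$ it is approximately $\mathcal N(0,1+s)$ (a centered Gaussian with a random mean of variance $sn$); the total variation distance between these two laws tends to $1$ only polynomially in $s$ (roughly $1-\Theta(\sqrt{\log s}/\sqrt s)$), not exponentially. Indeed, the paper's Theorem~\ref{thm:cutoff_profile} shows that for the monochromatic state (the homogeneous instance of your global comonotonic coupling) the limiting distance is exactly $\|\mathcal N(\mathbf 0,(1+s)I_{k-1})-\mathcal N(\mathbf 0,I_{k-1})\|_{\TV}$, which is strictly smaller than $1-2e^{-cs}$ for large $s$. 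The missing idea is the block structure used in Section~\ref{subsec:lb_proof}: partition $[n]$ into $a=\lfloor n/b\rfloor$ baskets of size $b=C_0 2^{t_n}$, take the initial distribution comonotonic \emph{within} each basket and independent \emph{across} baskets, and test the event that at least a constant fraction of baskets have large squared magnetization $\Xi_j=(\sum_{i\in B_j}g_i)^2$. Each basket separates $\mu_{t_n}$ from $\pi$ with a constant probability gap (Chebyshev under $\pi$, a second-moment/Paley--Zygmund argument under $\mu_{t_n}$, using Lemma~\ref{lem:uniform_cov}), and a Chernoff bound over the $a\asymp s$ independent baskets is what produces the exponential $e^{-cs}$. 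Your variance computation for $M$ (and the role of Lemma~\ref{lem:uniform_cov}) is correct and is in fact what the paper uses — but for the sharpness of the \emph{upper} bound in Theorem~\ref{thm:upper_bound}, not for this theorem.

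For the right-hand inequality, your injectivity event $\{i\mapsto U_i \text{ injective}\}$ has probability $\approx e^{-n^2/(2N)}=e^{-\Theta(ns)}\to 0$, so it cannot yield a constant lower bound on the overlap; you noticed the problem but did not replace the mechanism, and the final form $\int\mu_t\wedge\pi\ge\frac12 e^{-2(k-1)n2^{-t}}$ is stated as a guess rather than derived. The paper's actual argument is different and quite short: it uses the refined comparison $\frac12\|f-1\|_{L^1(\pi)}\le\phi(\|f-1\|_{L^2(\pi)})$ with $\phi(x)=1-\frac{1}{1+x^2}$ for $x\ge 1$, applied to the quenched density, together with the exact identity $\|h_t^\xi-1\|_{L^2(\pi)}^2=\prod_i\bigl(1+\sum_m (q_m^\xi(i))^2\bigr)-1\le e^{A_\xi}-1$ where $A_\xi=\sum_{i,m}(q_m^\xi(i))^2$ has mean $S=n(k-1)2^{-t}$; Markov's inequality gives $\Prob(A_\xi\ge 2S)\le\frac12$, and splitting the expectation on this event yields $D_n(t)\le 1-\frac12 e^{-2S}$. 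You would need to import this $L^1$-versus-$L^2$ inequality (or an equivalent overlap bound) to close this half of the proof.
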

\begin{remark}
    For the bound (\ref{lb}), we shall prove a stronger inequality $D_n(t) \le 1-\frac{1}{2}e^{-2(k-1)n2^{-t}}$ for $n2^{-t}\ge\frac{\log2}{2(k-1)}$
\end{remark}
The proofs of Theorem~\ref{thm:upper_bound}  and Theorem~\ref{thm: discrete lb} are given in Section~\ref{sec:proof_ub} and Section~\ref{sec:proof_lb}, respectively. At this moment, we assume these theorems and complete the proof of Theorem~\ref{thm:cutoff_phenomenon}.

\begin{proof}[Proof of Theorem~\ref{thm:cutoff_phenomenon}]

From Theorem~\ref{thm:upper_bound}, we have $D_n(t_n(\lambda)) \le n(k-1)2^{-t_n(\lambda)}$. The inequality $\lfloor x \rfloor > x-1$ implies $t_n(\lambda) > \log_2 n + \lambda - 1$. Substituting this into the bound gives:
\[
D_n(t_n(\lambda)) < n(k-1) 2^{-(\log_2 n + \lambda - 1)} = n(k-1) \left(\frac{1}{n} \cdot 2^{-\lambda} \cdot 2\right) = 2(k-1)2^{-\lambda}.
\]
Since this bound is independent of $n$, we have $\limsup_{n \to \infty} D_n(t_n(\lambda)) \le 2(k-1)2^{-\lambda}$.
Taking the limit as $\lambda \to \infty$:
\[
\lim_{\lambda \to \infty} \limsup_{n \to \infty} D_n(t_n(\lambda)) \le \lim_{\lambda \to \infty} 2(k-1)2^{-\lambda} = 0.
\]
As the total variation distance is always non-negative:
\[
\lim_{\lambda \to \infty} \limsup_{n \to \infty} D_n(t_n(\lambda)) = 0.
\]
Using the inequality $\lfloor x \rfloor \le x$, we have $t_n(\lambda) \le \log_2 n + \lambda$. This provides a lower bound for the scaling factor:
\[
n2^{-t_n(\lambda)} \ge n2^{-(\log_2 n + \lambda)} = n \left(\frac{1}{n} \cdot 2^{-\lambda}\right) = 2^{-\lambda}.
\]
By Theorem~\ref{thm: discrete lb}, the distance is bounded below by an increasing function of this factor. Therefore, we obtain:
\[
\liminf_{n \to \infty} D_n(t_n(\lambda)) \ge 1 - 2e^{-c \cdot 2^{-\lambda}}.
\]
Taking the limit as $\lambda \to -\infty$.
\[
\lim_{\lambda \to -\infty} \liminf_{n \to \infty} D_n(t_n(\lambda)) \ge \lim_{\lambda \to -\infty} (1 - 2e^{-c \cdot 2^{-\lambda}}) = 1 - 0 = 1.
\]
Since the total variation distance cannot exceed 1:
\[
\lim_{\lambda \to -\infty} \liminf_{n \to \infty} D_n(t_n(\lambda)) = 1.
\]
Having established both required limits, we completed the proof.
    
\end{proof}

\subsection{The Cutoff Profile for Monochromatic Initial States}

While the preceding theorems establish the existence and location of the cutoff phenomenon for the general inhomogeneous model, they do not describe the shape of the transition within the cutoff window. To provide a finer analysis of this convergence profile, we specialize our focus to the significant case of a homogeneous system, where the single-site marginals are identical ($p_i = p$ for all $i$). For this setting, evolving from a monochromatic initial state, we can derive an explicit characterization of the limiting total variation distance. This result generalizes the profile analysis of \cite{CaputoLabbeLacoin2025} from the two-spin case to arbitrary finite state spaces.

\begin{definition}[Monochromatic Initial Distribution]
The monochromatic initial distribution $\mu$ is a $p$-mixture of pure monochromatic configurations:
\[ 
\mu = \sum_{l=0}^{k-1} p(s_l) \delta_{(s_l, s_l, \dots, s_l)}. 
\]
\end{definition}

For this initial condition, the convergence profile is characterized by the total variation distance between two multivariate normal distributions.

\begin{theorem} 
\label{thm:cutoff_profile}
Let $(t_n)_{n\in\N}$ be a sequence of integers such that  $\lim_{n\to\infty} n 2^{-t_n} = s$ for some constant $s>0$. For the system starting from the monochromatic initial distribution $\mu$, the total variation distance converges to:
\[ 
\lim_{n\to\infty} \|\mu_{t_n} - \pi\|_{\TV} = \|\mathcal{N}(\mathbf{0}, (1+s)I_{k-1}) - \mathcal{N}(\mathbf{0}, I_{k-1})\|_{\TV}, 
\]
where $\mathcal{N}(\mathbf{0}, \Sigma)$ denotes the multivariate normal distribution with mean   $\mathbf{0}$ and covariance matrix $\Sigma$, and $I_{k-1}$ is the $(k-1) \times (k-1)$ identity matrix.
\end{theorem}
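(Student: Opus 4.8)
The plan is to use the graphical construction (Lemma~\ref{lem:graphical_construction}) to realize $\mu_{t_n}$ as the law of $\sigma^*$, where the leaves $\xi(1),\dots,\xi(N)$, $N=2^{t_n}$, are i.i.d.\ samples from the monochromatic distribution $\mu$, and to analyze the \emph{quenched} measure $\mu_\xi := \mathrm{Law}(\sigma^*\mid\xi)$, which by construction is the product measure $\bigotimes_{i=1}^n \widehat p_i$ with $\widehat p_i(s) = \frac1N\#\{x : \xi_i(x)=s\}$ the empirical frequency of spin $s$ along coordinate $i$. Because $\mu$ is monochromatic, for a fixed leaf $x$ the whole configuration $\xi(x)$ equals $(s_l,\dots,s_l)$ with probability $p(s_l)$; hence $N\widehat p_i(s_l)=\sum_{x=1}^N \ind\{\xi(x)=(s_l,\dots,s_l)\}$ is the \emph{same} $\mathrm{Binomial}(N,p(s_l))$ random variable for every $i$, i.e.\ $\widehat p_1=\widehat p_2=\cdots=\widehat p_n=:\widehat p$ almost surely. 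Thus the quenched law is $\widehat p^{\otimes n}$ with a single random frequency vector $\widehat p$, and $\|\mu_{t_n}-\pi\|_{\TV}$ can be bounded above and below, up to $o(1)$, by the total variation distance between the \emph{annealed mixture} $\E_\xi[\widehat p^{\otimes n}]$ and $\pi = p^{\otimes n}$; I would make the passage from the annealed TV distance to $\|\mu_{t_n}-\pi\|_\TV$ rigorous by the standard argument that, on the event where $\widehat p$ is within the CLT scale of $p$, the quenched and stationary measures are mutually contiguous in the relevant sense, while the complementary event has vanishing probability.

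Next I would identify the Gaussian limit. Write $\widehat p(s_l) = p(s_l) + N^{-1/2} G_l$ where, by the CLT, $(G_0,\dots,G_{k-2})$ converges to a centered Gaussian with the multinomial covariance $\Sigma_{lm} = p(s_l)\delta_{lm} - p(s_l)p(s_m)$ (the last coordinate $G_{k-1}$ is determined since frequencies sum to one). Now compute the log-likelihood ratio of the quenched product measure against $\pi$:
\[
\log\frac{\widehat p^{\otimes n}}{p^{\otimes n}}(\sigma) \;=\; \sum_{i=1}^n \log\frac{\widehat p(\sigma_i)}{p(\sigma_i)}.
\]
Under $\pi$, the summands are i.i.d.; a second-order Taylor expansion in the perturbation $N^{-1/2}G$ together with $n/N \to s$ shows that this log-ratio is asymptotically Gaussian. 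The linear term has mean $0$ and variance $\frac{n}{N}\sum_l \frac{G_l^2}{p(s_l)} + o(1)$ after accounting for the covariance structure, while the deterministic part of the quadratic term contributes the mean shift; organizing the computation, the log-likelihood ratio converges (jointly in $\sigma$ and $\xi$) to a Gaussian that is precisely the log-likelihood ratio between $\mathcal N(\mathbf 0,(1+s)I_{k-1})$ and $\mathcal N(\mathbf 0,I_{k-1})$ after the linear change of variables diagonalizing $\Sigma$. Then I would invoke the standard fact that total variation distance is a continuous functional of the law of the log-likelihood ratio (via $\|\mu-\nu\|_\TV = \E_\nu[(1-e^{L})_+]$ with $L$ the log-ratio, combined with uniform integrability), so that $\|\mu_{t_n}-\pi\|_\TV \to \|\mathcal N(\mathbf 0,(1+s)I_{k-1}) - \mathcal N(\mathbf 0,I_{k-1})\|_\TV$.

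To see why the variance inflates by the factor $1+s$ rather than some other constant: conditionally on $\xi$ the log-ratio under $\widehat p^{\otimes n}$ has, by the same expansion, mean $\approx \frac12\frac{n}{N}\sum_l G_l^2/p(s_l)$ and variance $\approx \frac{n}{N}\sum_l G_l^2/p(s_l)$, and $\frac{n}{N}\sum_l G_l^2/p(s_l)$ converges in distribution to $s$ times a $\chi^2_{k-1}$ variable (since $\Sigma$ composed with the quadratic form $\sum_l x_l^2/p(s_l)$ has $k-1$ unit eigenvalues on the simplex tangent space); this is exactly the Gaussian-mixture representation of $\mathcal N(\mathbf 0,(1+s)I_{k-1})$ relative to $\mathcal N(\mathbf 0,I_{k-1})$, namely a standard Gaussian whose coordinates are independently rescaled, which after averaging over the direction gives the claimed total variation. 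The main obstacle, and where I would spend the most care, is the uniformity in the double limit: one must control the log-likelihood ratio and its exponential moments uniformly over the range of $\widehat p$, truncating the rare event $\{\widehat p \text{ far from } p\}$ (probability $\to 0$ by large deviations for the binomial, using Assumption~1 to keep $p(s_l)$ bounded away from $0$ and $1$), and establishing uniform integrability of $e^{L}$ so that convergence in distribution of $L$ upgrades to convergence of the total variation functional — this is the analogue of the profile computation in \cite{CaputoLabbeLacoin2025} and is the technical heart of the argument.
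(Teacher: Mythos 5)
Your proposal is correct and follows essentially the same route as the paper: exploit the fact that for a monochromatic initial state the quenched measure is a product with a single random marginal, apply a CLT to that marginal together with a second-order expansion of the log quenched density, identify the annealed limit as the Gaussian mixture $\mathcal{N}(\mathbf{0},(1+s)I_{k-1})$ of $\mathcal{N}(\mathbf{Z}_s,I_{k-1})$ over $\mathbf{Z}_s\sim\mathcal{N}(\mathbf{0},sI_{k-1})$, and upgrade distributional convergence to convergence of total variation by a truncation/uniform-integrability argument. The only differences are cosmetic (you use empirical spin frequencies and the multinomial CLT where the paper uses the orthonormal-basis quenched moments and the reduction to the statistic $\bar{\fvec}_n(\sigma)$), and your ``contiguity'' detour is unnecessary since $\mu_{t_n}=\E_\xi[\widehat{p}^{\otimes n}]$ holds exactly.
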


Therefore, we can confirm the so-called \textit{profile cutoff}. The proof of this theorem will be given in Section~\ref{sec:explicit profile}.

\begin{remark} (Continuous dynamics) In \cite{CaputoLabbeLacoin2025}, a continuous dynamics for two-spin homogeneous model was also considered. While our presentation focuses on the discrete-time setting, the framework extends straightforwardly to its continuous-time counterpart by integrating the branching process machinery from \cite{CaputoLabbeLacoin2025}. However, we omit a technical derivation of these results in order to focus on the discrete model in a more clear manner. 
\end{remark}

\section{Key Tools}

Before proceeding to the proofs of our main theorems, we introduce the common mathematical framework that underpins all subsequent arguments.

\subsection{Quenched Distribution}

The key observation is that, conditionally on a realization of the environment $\xi$, the components $\sigma_i^*$ are independent by (\ref{eqn_xi}). Therefore, it is natural to divide our analysis into two step: the realization of $\xi$, and the realization of $\sigma_i^*$ conditioning on the realization of $\xi$. Therefore, it is important to understand the distribution of $\sigma_i^*$ conditioned on $\xi$, and the corresponding conditional probability measure, which we call the \textit{quenched distribution}, is denoted by $\mu_t^\xi$. Of course, it is a product measure on $\Omega_n$ with marginals given by the empirical distribution of spins at each site over the leaves:
    \[
    \mu_t^\xi(\sigma) = \prod_{i=1}^n \hat{p}_i^\xi(\sigma_i), \quad \text{where} \quad \hat{p}_i^\xi(s_l) := \frac{1}{N} \sum_{x=1}^N \mathbf{1}_{\{\xi_i(x) = s_l\}}, l=0,1,\dots,k-1
    \]
    The full distribution $\mu_t$ is then recovered by averaging over all possible environments: $\mu_t = \E_\xi[\mu_t^\xi]$.

    \begin{lemma}
    For all $t\ge 0$, we have that $\mu_t = \E_\xi[\mu_t^\xi]$.
    \end{lemma}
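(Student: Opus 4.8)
The plan is to read the identity off the graphical construction of Section~2 together with the tower property for conditional expectations; since $\Omega_n$ is finite there are no measure-theoretic subtleties, and all conditional laws are honest random probability vectors. By Lemma~\ref{lem:graphical_construction}, $\mu_t$ is the law of the root spin vector $\sigma^* = (\sigma_1^*, \dots, \sigma_n^*)$ defined in \eqref{eqn_xi} by $\sigma_i^* = \xi_i(U_i)$, where $\xi = \{\xi(x)\}_{x=1}^N$ are i.i.d.\ with law $\mu$, $N = 2^t$, and $U_1, \dots, U_n$ are independent, uniform on the leaf set $\{1,\dots,N\}$, and independent of $\xi$. It therefore suffices to identify the conditional law of $\sigma^*$ given $\xi$ with $\mu_t^\xi$ and then average over $\xi$.

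For the first step, fix $\sigma = (\sigma_1,\dots,\sigma_n) \in \Omega_n$. Conditionally on $\xi$, the events $\{\sigma_i^* = \sigma_i\} = \{\xi_i(U_i) = \sigma_i\}$, $i \in [n]$, are determined by the $n$ mutually independent variables $U_1,\dots,U_n$ alone, hence are conditionally independent given $\xi$; and for each $i$, because $U_i$ is uniform on $\{1,\dots,N\}$ and independent of $\xi$,
\[
\p\bigl(\sigma_i^* = \sigma_i \mid \xi\bigr) = \frac{1}{N}\sum_{x=1}^N \mathbf{1}_{\{\xi_i(x) = \sigma_i\}} = \hat p_i^\xi(\sigma_i).
\]
Multiplying over $i$ yields $\p(\sigma^* = \sigma \mid \xi) = \prod_{i=1}^n \hat p_i^\xi(\sigma_i) = \mu_t^\xi(\sigma)$, i.e.\ the conditional law of $\sigma^*$ given $\xi$ is exactly the product measure $\mu_t^\xi$.

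For the second step, take expectations: by the tower property, $\mu_t(\sigma) = \p(\sigma^* = \sigma) = \E_\xi\bigl[\p(\sigma^* = \sigma \mid \xi)\bigr] = \E_\xi[\mu_t^\xi(\sigma)]$ for every $\sigma \in \Omega_n$, which is precisely $\mu_t = \E_\xi[\mu_t^\xi]$. I do not anticipate a genuine obstacle here: the whole argument reduces to the elementary conditional-independence and empirical-marginal computation above, and the degenerate case $t = 0$ (where $N = 1$, each $U_i \equiv 1$, and $\mu_t^\xi = \delta_{\xi(1)} = \prod_i \delta_{\xi_i(1)}$) is handled by the same two lines, giving $\E_\xi[\delta_{\xi(1)}] = \mu = \mu_0$.
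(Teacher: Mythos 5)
Your proposal is correct and follows the same route as the paper: invoke Lemma~\ref{lem:graphical_construction} to identify $\mu_t$ as the law of $\sigma^*$, observe that $\mu_t^\xi$ is the conditional law of $\sigma^*$ given $\xi$, and conclude by the law of total probability. The only difference is that you explicitly verify the conditional-law identification via the computation $\p(\sigma_i^*=\sigma_i\mid\xi)=\hat p_i^\xi(\sigma_i)$, which the paper takes as definitional.
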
 

    \begin{proof} This is a direct consequence of Lemma~\ref{lem:graphical_construction}, which establishes that $\mu_t$ is the law of the random configuration $\sigma^*$. The quenched distribution $\mu_t^\xi$ is, by definition, the conditional law of $\sigma^*$ given the environment $\xi$. The identity is therefore an application of the law of total probability. 
    \end{proof}

    The central object of our analysis is the \textit{quenched density}, defined as the Radon-Nikodym derivative of the quenched distribution with respect to the stationary measure:
    \[
    h_t^\xi(\sigma) := \frac{d\mu_t^\xi}{d\pi}(\sigma) = \prod_{i=1}^n \frac{\hat{p}_i^\xi(\sigma_i)}{p_i(\sigma_i)}.
    \]
    The analysis in \cite{CaputoLabbeLacoin2025} hinges on finding a simple, explicit algebraic form for this density. For the homogeneous two-spin (Boolean) case, where spins $\sigma_i \in \{-1, 1\}$ and the stationary marginals are balanced ($p_i(\pm 1) = 1/2$), the density simplifies to a product of linear terms: $h_t^\xi(\sigma) = \prod_{i=1}^n (1 + \sigma_i q^\xi(i))$, where $q^\xi(i)$ is the empirical average of spins at site $i$ over the leaves. They extended this form to the unbalanced two-spin case by re-scaling the spin values from $\{-1, 1\}$ to $\{\sqrt{(1-p_i)/p_i}, -\sqrt{p_i/(1-p_i)}\}$. This specific transformation creates a new spin variable with mean 0 and variance 1 under the stationary measure. In the special case of a two-spin system, this single normalized variable happens to be sufficient to form a complete orthonormal basis for the two-dimensional function space. This is what allows the simple product structure of the density to be preserved.

    This reliance on a simple re-scaling, however, is an artifact of the two-spin system's low dimensionality and fundamentally breaks down for systems with three or more spin states. For $k \ge 3$, a simple, universal algebraic representation is no longer apparent. This obstacle motivates our central methodological contribution: the development of a systematic machinery that provides a tractable algebraic form for the quenched density, regardless of the number of spin states or the specific marginal distributions. We achieve this by expanding the density function onto an orthonormal basis (ONB) of polynomials, constructed for each site's marginal distribution.

\subsection{Orthonormal Polynomial Basis}
\label{sec:basis}

The analysis in this paper utilizes an orthonormal basis for functions defined on the single-site state space $S$. We denote the set of all real-valued functions on $S$ as $V(S)$. As $|S|=k$, $V(S)$ is a $k$-dimensional real vector space, regardless of the site.

For each site $i \in [n]$, we equip this space with a site-specific bilinear form, weighted by the stationary marginal distribution $p_i$:
\[ 
\langle g, h \rangle_{p_i} := \sum_{s \in S} g(s)h(s)p_i(s) = \E_\pi[g(\sigma_i)h(\sigma_i)],
\]
for any functions $g, h \in V(S)$. Assumption 1 ensures this form is a valid inner product, thus defining the inner product space $L^2(S, p_i)$. For brevity, we will often write this as $L^2(p_i)$.

To construct an explicit basis for this space, we begin with the set of monomial functions $\{s^j\}_{j=0}^{k-1} = \{1, s, s^2, \dots, s^{k-1}\}$. These functions form a basis for $V(S)$ by the determinant formula for Vandermonde matrix. 

We now apply the Gram-Schmidt orthonormalization process to this monomial basis with respect to the inner product $\langle \cdot, \cdot \rangle_{p_i}$. This procedure yields, for each site $i$, an orthonormal basis of polynomials $\{f_m^i\}_{m=0}^{k-1}$ for the space $L^2(p_i)$. To make this construction concrete, let us explicitly derive the first two basis functions, $f_0^i$ and $f_1^i$.

\begin{itemize}
\item The first basis function $f_0^i$ is obtained by normalizing the first monomial function, $v_0(s) = s^0 = 1$. Since its squared norm is trivially $1$, we merely have  
\[
f_0^i(s) = \frac{v_0(s)}{\|v_0\|_{L^2(p_i)}} = 1.
\]

\item The second basis function $f_1^i$ is constructed from the second monomial  $v_1(s) = s^1 = s$. First, we make $v_1$ orthogonal to $f_0^i$ by subtracting its projection onto $f_0^i$. Let this orthogonal vector be $w_1$:
\[
w_1(s) = v_1(s) - \langle v_1, f_0^i \rangle_{p_i} f_0^i(s) = s - \E_{p_i}[s]
\]
since $f_0^i(s)=1$. Now, since the squared norm of $w_1$ is the variance of the spin variable, the second orthonormal basis function is the standardized spin variable:
\[
f_1^i(s) = \frac{w_1(s)}{\|w_1\|_{L^2(p_i)}} = \frac{s - \E_{p_i}[s]}{\sqrt{\mathrm{Var}_{p_i}(s)}}.
\]
\end{itemize}

This process continues for higher-order monomials, yielding polynomials $f_m^i$ that are orthogonal to all lower-degree polynomials and involve higher moments of the distribution $p_i$.

The general properties of the basis $\{f_m^i\}_{m=0}^{k-1}$, which are direct consequences of its construction, are as follows:
\begin{itemize}
    \item  \textbf{Zero Mean Property}: For any $m \ge 1$, the function $f_m^i$ is orthogonal to $f_0^i = 1$. This implies that it has zero mean with respect to the stationary measure:
    \begin{equation}
    \label{zero mean}
    \E_\pi[f_m^i(\sigma_i)] = \langle f_m^i, f_0^i \rangle_{p_i} = 0. 
    \end{equation}
    \item \textbf{Uniform Boundedness}: The nondegeneracy assumption, $p_i(s) \ge \delta > 0$, ensures that the basis functions are uniformly bounded. For any $m$ and any $s \in S$, the orthonormality condition implies:
    \begin{equation}
    \label{uniform bdd}
    (f_m^i(s))^2 \delta \le (f_m^i(s))^2 p_i(s) \le \sum_{s' \in S} (f_m^i(s'))^2 p_i(s') = \|f_m^i\|_{L^2(p_i)}^2 = 1.
    \end{equation} 
    Therefore, we have a uniform bound $|f_m^i(s)| \le 1/\sqrt{\delta}$ for all $i, m, s$. \textbf{We emphasize that this is the location where Assumption 1 is crucially required.}
\end{itemize}

\subsection{Quenched Moments and Density Expansion}
\label{sec:quenched_moments}

With the orthonormal basis $\{f_m^i\}$ for each site, we now define the central random variables in our analysis. These variables, which we call the \textit{quenched moments}, are defined as the empirical average of the basis functions evaluated on the spins at the leaves of the binary tree.

\begin{definition}[Quenched Moments]
For each site $i \in [n]$ and basis index $m \in \{0, \dots, k-1\}$, the quenched moment $q_m^\xi(i)$ is defined as
\[
q_m^\xi(i) := \frac{1}{N} \sum_{x=1}^N f_m^i(\xi_i(x)).
\]
\end{definition}

These moments capture the statistical fluctuations of the initial state as seen through the lens of the basis functions. Their fundamental properties are summarized in the following lemma.

\begin{lemma}[Properties of Quenched Moments]
\label{lem:properties of QM}
The quenched moments have the following properties for any site $i \in [n]$:
\begin{enumerate}
    \item For $m=0$: $q_0^\xi(i) = 1$.
    \item For $m \ge 1$, the mean over the environment $\xi$ is zero: $\E_\xi[q_m^\xi(i)] = 0$.
    \item For $m \ge 1$, the second moment is given by: $\E_\xi[(q_m^\xi(i))^2] = 2^{-t}$.
\end{enumerate}
\end{lemma}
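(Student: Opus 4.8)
The plan is to unwind the definition of $q_m^\xi(i)=N^{-1}\sum_{x=1}^N f_m^i(\xi_i(x))$ and exploit the fact that the leaf spins $\xi_i(x)$, $x=1,\dots,N$, are i.i.d.\ with common marginal $p_i$ under the environment law. Property (1) is immediate: for $m=0$ we have $f_0^i\equiv 1$, so $q_0^\xi(i)=N^{-1}\sum_{x=1}^N 1 = 1$ deterministically. For property (2), linearity of expectation gives $\E_\xi[q_m^\xi(i)] = N^{-1}\sum_{x=1}^N \E_\xi[f_m^i(\xi_i(x))] = \E_{p_i}[f_m^i(\sigma_i)]$, and this equals $0$ for $m\ge 1$ by the Zero Mean Property \eqref{zero mean} of the orthonormal basis. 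So the first two items are essentially just bookkeeping on the graphical construction plus the basis construction from Section~\ref{sec:basis}.

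The substantive computation is property (3). Here I would write
\[
\E_\xi\big[(q_m^\xi(i))^2\big] = \frac{1}{N^2}\sum_{x=1}^N\sum_{y=1}^N \E_\xi\big[f_m^i(\xi_i(x))\,f_m^i(\xi_i(y))\big],
\]
and split the double sum into diagonal ($x=y$) and off-diagonal ($x\ne y$) terms. For the diagonal terms, $\E_\xi[(f_m^i(\xi_i(x)))^2] = \|f_m^i\|_{L^2(p_i)}^2 = 1$ by orthonormality, contributing $N\cdot 1 = N$. For the off-diagonal terms, independence of $\xi_i(x)$ and $\xi_i(y)$ for $x\ne y$ factorizes the expectation as $\E_{p_i}[f_m^i(\sigma_i)]^2 = 0$, again by \eqref{zero mean}. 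Hence the sum collapses to $N/N^2 = 1/N = 2^{-t}$, since $N=2^t$.

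There is no real obstacle here — the only point requiring care is making sure the independence structure in the graphical construction is invoked correctly: the leaves $\{\xi(x)\}_{x=1}^N$ are i.i.d.\ copies of $\mu$, so in particular their $i$-th coordinates $\{\xi_i(x)\}_{x=1}^N$ are i.i.d.\ with law $p_i$ (the $i$-th marginal of $\mu\in\mathcal{P}^{(n)}$), which is exactly what licenses both the orthonormality evaluation on the diagonal and the factorization off-diagonal. One could phrase the whole argument as: $q_m^\xi(i)$ is an empirical mean of i.i.d.\ bounded random variables with mean $0$ (for $m\ge1$) and variance $1$, so its own mean is $0$ and its variance is $1/N$; this is the cleanest packaging and I would present it that way.
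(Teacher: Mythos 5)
Your proposal is correct and follows the same route as the paper: item (1) from $f_0^i\equiv 1$, item (2) from linearity plus the zero-mean property, and item (3) by expanding the square, killing the off-diagonal terms via independence and zero mean, and evaluating the diagonal via orthonormality to get $N/N^2 = 2^{-t}$. No differences worth noting.
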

\begin{proof}
The proof relies on the properties of the basis $\{f_m^i\}$ and the i.i.d. nature of the leaf configurations $\{\xi(x)\}_{x=1}^N \sim \mu$.
\begin{enumerate}
    \item Since $f_0^i(s)=1$ for all $s$, $q_0^\xi(i) = \frac{1}{N} \sum_{x=1}^N 1 = 1$.
    \item For $m \ge 1$, by linearity of expectation, the fact that $\xi_i(x) \sim p_i$, and the zero mean property (\ref{zero mean}) observed above, we have 
    \[ \E_\xi[q_m^\xi(i)] = \frac{1}{N} \sum_{x=1}^N \E_\xi[f_m^i(\xi_i(x))] = \E_\mu[f_m^i(\sigma_i)] = \E_\pi[f_m^i(\sigma_i)] = 0. \]
   \item For the second moment ($m \ge 1$), expanding the square gives
    \[ \E_\xi[(q_m^\xi(i))^2] = \frac{1}{N^2} \sum_{x,y=1}^N \E_\xi[f_m^i(\xi_i(x)) f_m^i(\xi_i(y))]. \]
    For $x \neq y$, the configurations $\xi(x)$ and $\xi(y)$ are independent, so the expectation of the cross-terms factors into $\E_\xi[f_m^i(\xi_i(x))] \E_\xi[f_m^i(\xi_i(y))] = 0 \cdot 0 = 0$. Only the $N$ diagonal terms ($x=y$) survive:
    \[ \E_\xi[(q_m^\xi(i))^2] = \frac{1}{N^2} \sum_{x=1}^N \E_\xi[(f_m^i(\xi_i(x)))^2] = \frac{N}{N^2} \E_\mu[(f_m^i(\sigma_i))^2] = \frac{1}{N} \|f_m^i\|_{L^2(p_i)}^2 = \frac{1}{N} = 2^{-t}. \]
\end{enumerate}
\end{proof}

The significance of these quenched moments is that they are precisely the coefficients in the expansion of the quenched density $h_t^\xi(\sigma)$.

\begin{proposition}[Density Expansion]
\label{prop:density expansion}
For any $\sigma \in \Omega_n$, the quenched density can be expressed as:
\[
h_t^\xi(\sigma) = \prod_{i=1}^n \left(1 + \sum_{m=1}^{k-1} q_m^\xi(i) f_m^i(\sigma_i)\right).
\]
\end{proposition}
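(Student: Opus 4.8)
The plan is to establish the claimed product formula site-by-site, reducing the statement to a single-site identity that expresses the likelihood ratio $\hat p_i^\xi(s)/p_i(s)$ as a linear combination of the orthonormal basis functions. Concretely, since $h_t^\xi(\sigma) = \prod_{i=1}^n \bigl(\hat p_i^\xi(\sigma_i)/p_i(\sigma_i)\bigr)$ by the definition of the quenched density, it suffices to show that for each fixed site $i$ and each $s \in S$,
\[
\frac{\hat p_i^\xi(s)}{p_i(s)} = 1 + \sum_{m=1}^{k-1} q_m^\xi(i)\, f_m^i(s),
\]
and then multiply over $i \in [n]$.

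To prove this single-site identity, I would work in the $k$-dimensional inner product space $L^2(S, p_i)$. The key point is that $\{f_m^i\}_{m=0}^{k-1}$ is an orthonormal basis, so any function $g \in V(S)$ admits the expansion $g = \sum_{m=0}^{k-1} \langle g, f_m^i\rangle_{p_i} f_m^i$. I apply this to the function $g_i^\xi(s) := \hat p_i^\xi(s)/p_i(s)$, which is well-defined since $p_i(s) \ge \delta > 0$ under Assumption 1. The coefficients are then computed directly:
\[
\langle g_i^\xi, f_m^i \rangle_{p_i} = \sum_{s \in S} \frac{\hat p_i^\xi(s)}{p_i(s)} f_m^i(s)\, p_i(s) = \sum_{s \in S} \hat p_i^\xi(s) f_m^i(s) = \frac{1}{N}\sum_{x=1}^N f_m^i(\xi_i(x)) = q_m^\xi(i),
\]
where the third equality uses the definition $\hat p_i^\xi(s_l) = \frac{1}{N}\sum_x \mathbf 1_{\{\xi_i(x)=s_l\}}$ and swaps the order of summation. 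For $m=0$ this gives $\langle g_i^\xi, f_0^i\rangle_{p_i} = q_0^\xi(i) = 1$ by Lemma~\ref{lem:properties of QM}(1) (equivalently, $\sum_s \hat p_i^\xi(s) = 1$ since $\hat p_i^\xi$ is a probability measure and $f_0^i \equiv 1$). Substituting these coefficients into the orthonormal expansion yields exactly $g_i^\xi = 1 + \sum_{m=1}^{k-1} q_m^\xi(i) f_m^i$, which is the desired single-site identity.

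Taking the product over $i \in [n]$ then gives
\[
h_t^\xi(\sigma) = \prod_{i=1}^n g_i^\xi(\sigma_i) = \prod_{i=1}^n \left(1 + \sum_{m=1}^{k-1} q_m^\xi(i)\, f_m^i(\sigma_i)\right),
\]
completing the proof. There is no real obstacle here: the argument is essentially a change-of-basis computation, and the only subtlety worth flagging explicitly is that the manipulation requires $p_i(s) > 0$ (so that $g_i^\xi$ is defined and the inner product is nondegenerate), which is guaranteed by the nondegeneracy built into $\mathcal P$ and Assumption~1. The mild bookkeeping step is justifying the interchange of the two finite sums in the coefficient computation, which is immediate since all sums are finite.
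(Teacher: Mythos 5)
Your proof is correct and follows essentially the same route as the paper: reduce to the single-site identity, expand the likelihood ratio $\hat p_i^\xi/p_i$ in the orthonormal basis of $L^2(S,p_i)$, and compute that the coefficients are exactly the quenched moments $q_m^\xi(i)$, with $q_0^\xi(i)=1$ supplying the constant term. No gaps.
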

\begin{proof}
We consider the single-site density $h_{t,i}^\xi(s) := \hat{p}_i^\xi(s)/p_i(s)$, which is a function in $L(S)$. Since $\{f_m^i\}_{m=0}^{k-1}$ is an orthonormal basis for $L^2(p_i)$, we can express $h_{t,i}^\xi$ as:
\[ h_{t,i}^\xi(s) = \sum_{m=0}^{k-1} c_m f_m^i(s), \quad \text{where } c_m = \langle h_{t,i}^\xi, f_m^i \rangle_{p_i}. \]
Let us compute the coefficient $c_m$:
\[
c_m = \left\langle \frac{\hat{p}_i^\xi}{p_i}, f_m^i \right\rangle_{p_i} = \sum_{s \in S} \left(\frac{\hat{p}_i^\xi(s)}{p_i(s)}\right) f_m^i(s) p_i(s) = \sum_{s \in S} \hat{p}_i^\xi(s) f_m^i(s).
\]
Substituting the definition of the empirical marginal $\hat{p}_i^\xi(s) = \frac{1}{N} \sum_{x=1}^N \mathbf{1}_{\{\xi_i(x)=s\}}$: 
\[
c_m = \sum_{s \in S} \left(\frac{1}{N} \sum_{x=1}^N \mathbf{1}_{\{\xi_i(x)=s\}}\right) f_m^i(s) = \frac{1}{N} \sum_{x=1}^N \sum_{s \in S} \mathbf{1}_{\{\xi_i(x)=s\}} f_m^i(s) = \frac{1}{N} \sum_{x=1}^N f_m^i(\xi_i(x)).
\]
This final expression is exactly our definition of the quenched moment $q_m^\xi(i)$. Thus, $c_m = q_m^\xi(i)$. The single-site expansion becomes:
\[ h_{t,i}^\xi(\sigma_i) = \sum_{m=0}^{k-1} q_m^\xi(i) f_m^i(\sigma_i) = q_0^\xi(i) f_0^i(\sigma_i) + \sum_{m=1}^{k-1} q_m^\xi(i) f_m^i(\sigma_i) = 1 + \sum_{m=1}^{k-1} q_m^\xi(i) f_m^i(\sigma_i) \]
where the final equality follows from the fact that $f_0^i = 1$ (by construction) and $q_0^\xi(i) = 1$ (by Lemma~\ref{lem:properties of QM} (1)).
The full density is the product over all sites, which completes the proof.
\end{proof}

This expansion provides the fundamental decomposition of the evolved measure and serves as the starting point for proving the main theorems on the cutoff bounds.

\subsection{Comonotonic Coupling and Its Correlation Properties}
\label{subsec:comonotonic}

The proofs of the lower bound and the sharpness of the upper bound rely on constructing specific initial distributions with strong positive correlations. A powerful method for achieving this is the comonotonic coupling, which we formally define below.

\begin{definition}[Comonotonic Coupling]
\label{def:comonotonic} 
Let $\{p_i\}_{i \in I}$ be a collection of single-site probability distributions on $S$, where $I$ is an arbitrary finite index set. A set of random variables $\{\sigma_i\}_{i \in I}$ is said to be \textbf{comonotonically coupled} if there exists a single, common random variable $U \sim \text{Uniform}[0,1]$ such that
\[
\sigma_i = F_i^{-1}(U) \quad \text{for all } i \in I,
\]
where $F_i^{-1}$ is the quantile function (or generalized inverse distribution function) of the marginal distribution $p_i$. This coupling method ensures that all variables $\sigma_i, i\in I$ move together in a highly correlated, non-decreasing manner; if one variable takes a high value (relative to its distribution), all others do as well.
\end{definition}

In their work, Caputo et al. noted that proving a lower bound in the inhomogeneous setting would require one to ``introduce a non-homogeneous analogue of the monochromatic distribution" \cite{CaputoLabbeLacoin2025}. The comonotonic coupling defined above serves precisely this purpose. It is the natural generalization of the monochromatic state—which enforces maximal correlation in the homogeneous case by forcing all spins to be identical—to the inhomogeneous setting. This construction is consistent with the original concept, as it reduces to the monochromatic distribution precisely when all single-site marginals are identical.

The following lemma establishes that the correlation induced by this coupling is not only positive but also uniformly bounded away from zero, a crucial property for our subsequent proofs. For clarity, let us first define $\mathcal{P}_\delta$ as the set of single-site probability distributions satisfying Assumption 1:
$$
\mathcal{P}_\delta := \{ p \in \mathcal{P} \mid p(s) \in [\delta, 1-\delta] \text{ for all } s \in S \}.
$$
For notational convenience, we will denote $f^i_1(\sigma_i)$ simply as $g_i(\sigma_i)$ or $g_i$.

\begin{lemma}
\label{lem:uniform_cov}
There is a uniform bound $\rho= \rho(k,\delta)>0$ such that:
\begin{equation*}
    \mathbb{E}[g_i(\sigma_i)g_l(\sigma_l)] \ge \rho \quad \forall \; p_i, p_l \in \mathcal{P}_\delta
\end{equation*}
where $\sigma_i$ and $\sigma_l$ are coupled comonotonically.
\end{lemma}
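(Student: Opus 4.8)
\textbf{Proof strategy for Lemma~\ref{lem:uniform_cov}.}

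The plan is to lower-bound $\mathbb{E}[g_i(\sigma_i)g_l(\sigma_l)]$ by a continuity/compactness argument after first showing the quantity is \emph{strictly positive} for every fixed pair $(p_i,p_l)\in\mathcal{P}_\delta\times\mathcal{P}_\delta$. Write $g_i = f_1^i$, so $g_i(s) = (s - \mathbb{E}_{p_i}[s])/\sqrt{\mathrm{Var}_{p_i}(s)}$ is an \emph{increasing} affine function of $s$ with $\mathbb{E}_{p_i}[g_i] = 0$ and $\mathbb{E}_{p_i}[g_i^2] = 1$. Under the comonotonic coupling $\sigma_i = F_i^{-1}(U)$, $\sigma_l = F_l^{-1}(U)$, both $g_i(\sigma_i)$ and $g_l(\sigma_l)$ are nondecreasing functions of the common uniform $U$. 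Hence by the classical FKG/Chebyshev correlation inequality for monotone functions of a single random variable,
\[
\mathbb{E}[g_i(\sigma_i)g_l(\sigma_l)] \ge \mathbb{E}[g_i(\sigma_i)]\,\mathbb{E}[g_l(\sigma_l)] = 0,
\]
so the covariance is always \emph{nonnegative}. The point is to upgrade this to a strict, uniform bound.

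The key step is to argue that the covariance cannot be zero on $\mathcal{P}_\delta\times\mathcal{P}_\delta$, and then invoke compactness. First I would verify the map $(p_i,p_l)\mapsto \mathbb{E}[g_i(\sigma_i)g_l(\sigma_l)]$ is continuous on $\mathcal{P}_\delta\times\mathcal{P}_\delta$: the quantile functions $F_i^{-1}$, $F_l^{-1}$ are step functions whose jump locations and heights depend continuously on the probability vectors $(p_i(s))_{s\in S}$ as long as these stay in $[\delta,1-\delta]^k$ (the thresholds $F_i(s_j) = \sum_{j'\le j} p_i(s_{j'})$ never collide with each other badly, and the means and variances $\mathrm{Var}_{p_i}(s)\ge$ some positive constant are continuous and bounded away from $0$ by Assumption~1), so the integral $\int_0^1 g_i(F_i^{-1}(u))g_l(F_l^{-1}(u))\,du$ is continuous. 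Since $\mathcal{P}_\delta$ is a closed bounded subset of $\mathbb{R}^k$ (the intersection of the simplex with $[\delta,1-\delta]^k$), it is compact, hence so is the product, and a continuous positive function on a compact set attains a positive minimum $\rho(k,\delta) > 0$.

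It therefore remains to rule out $\mathbb{E}[g_i(\sigma_i)g_l(\sigma_l)] = 0$ for any fixed admissible pair. Equality in the Chebyshev correlation inequality for monotone functions of $U$ forces one of $g_i(F_i^{-1}(U))$, $g_l(F_l^{-1}(U))$ to be a.s.\ constant; but $g_i$ is a nonconstant affine function and $F_i^{-1}(U)$ takes all $k\ge 2$ values $s_0,\dots,s_{k-1}$ with positive probability (each $\ge\delta$), so neither is constant — contradiction. Alternatively, and more robustly, one can compute directly: letting $U' $ be an independent copy of $U$,
\[
\mathbb{E}[g_i(\sigma_i)g_l(\sigma_l)] = \tfrac12\,\mathbb{E}\big[(g_i(F_i^{-1}(U)) - g_i(F_i^{-1}(U')))(g_l(F_l^{-1}(U)) - g_l(F_l^{-1}(U')))\big],
\]
and since $g_i\circ F_i^{-1}$ and $g_l\circ F_l^{-1}$ are both nondecreasing, the integrand is pointwise $\ge 0$, and it is strictly positive on the event $\{U, U'\text{ lie on opposite sides of some common threshold}\}$, which has probability bounded below in terms of $\delta$ (e.g., the threshold separating the spin realized at the smallest uniform values from the rest sits at height $p_i(s_0)\wedge p_l(s_{\ell_0})\in[\delta,1-\delta]$, giving a block of $u,u'$ values of product-measure $\ge \delta(1-\delta)$ on which the integrand is at least a positive constant depending on the minimal spin gaps and the variance lower bound). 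This direct estimate in fact yields an explicit $\rho(k,\delta)$ and makes the compactness step optional.

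\textbf{Main obstacle.} The genuinely delicate point is the lower bound on the integrand in the last display — one must exhibit a set of $(u,u')$ of probability bounded below by a function of $\delta$ alone on which \emph{both} factors are bounded away from $0$ by a constant depending only on $k$ and $\delta$; this requires controlling (i) that the jump of $g_i\circ F_i^{-1}$ across at least one threshold is not too small, which follows from $\mathrm{Var}_{p_i}(s)$ being bounded above (by $\max_{s,s'}(s-s')^2$) and the spin gap $\min_{j}(s_{j+1}-s_j)$ being a fixed positive number, and (ii) that the threshold in question sits at a uniform height inside $[\delta,1-\delta]$ so that both sides have mass $\ge\delta$. Handling the case where the two marginals' thresholds are not aligned — so that one picks the coarsest common refinement — is the only bookkeeping subtlety; the compactness route sidesteps it entirely at the cost of an ineffective constant, which is acceptable since the statement only claims existence of $\rho(k,\delta)$.
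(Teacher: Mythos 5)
Your proposal is correct and follows essentially the same route as the paper: first establish strict positivity of $\mathbb{E}[g_ig_l]$ for each fixed pair via the symmetrized representation $\tfrac12\mathbb{E}[(h_i(U)-h_i(U'))(h_l(U)-h_l(U'))]$ with a pointwise nonnegative integrand that is strictly positive on a $\delta$-dependent region of positive measure (the paper uses $[0,\delta]\times[1-\delta,1]$, where the spins are forced to be $s_0$ and $s_{k-1}$), and then upgrade to a uniform bound by continuity of $(p_i,p_l)\mapsto\mathbb{E}[g_ig_l]$ on the compact set $\mathcal{P}_\delta\times\mathcal{P}_\delta$ (the paper justifies continuity via dominated convergence). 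Your optional explicit-constant variant goes beyond the paper but is not needed, as the lemma only asserts existence of $\rho(k,\delta)$.
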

\begin{proof} 
The proof is structured in two parts. First, we establish that for any specific pair of marginal distributions, the covariance is strictly positive. Second, we use a compactness argument to show that this positivity implies a uniform lower bound.

\vspace{\baselineskip}
\noindent\textbf{Step 1.} We first prove that for any fixed pair of marginal distributions $(p_i, p_l)$, the correlation $\mathbb{E}[g_i g_l]$ is strictly positive. To that end, without loss of generality, we first order the spin states such that $s_0 < s_1 < \dots < s_{k-1}$. 
The function $g_i$ is strictly increasing since $ g_i(s) = \frac{s - \E_{\sigma_i \sim p_i}[\sigma_i]}{\sqrt{\text{Var}_{\sigma_i\sim p_i}(\sigma_i)}}.$
Let us define the function $h_i: [0,1] \to \R$ as $h_i(u) := g_i(F_i^{-1}(u))$. Since $h_i$ is a composition of two non-decreasing functions, $h_i$ is also non-decreasing. This leads to the pointwise inequality:
\begin{equation*}
    (h_i(x) - h_i(y))(h_l(x) - h_l(y)) \ge 0.
\end{equation*}
Integrating over $[0,1]^2$ gives:
\begin{equation*} 
    \iint_{[0,1]^2} (h_i(x) - h_i(y))(h_l(x) - h_l(y)) \,dx\,dy \ge 0.
\end{equation*}
The nondegeneracy condition $p_i(s) \ge \delta$ implies that if $U \in [0, \delta] $, then $ \sigma_i =\sigma_l= s_0$, and if $U \in [1-\delta, 1] $, then $ \sigma_i=\sigma_l = s_{k-1}$. 

For any point $(x,y) \in [0, \delta] \times [1-\delta, 1]$, we have $h_i(x) - h_i(y) = g_i(s_0) - g_i(s_{k-1}) < 0$ and $h_l(x) - h_l(y) = g_l(s_0) - g_l(s_{k-1}) < 0$.
Since this region has a non-zero measure of $\delta^2 > 0$, and the integrand is non-negative everywhere else, the integral over the entire unit square must be strictly positive:
\[
    \iint_{[0,1]^2} (h_i(x) - h_i(y))(h_l(x) - h_l(y)) \,dx\,dy > 0.
\]
Recalling that $h_i(u) = g_i(F_i^{-1}(u))$ and that the expectation is taken over a common random variable $U \sim \text{Uniform}[0,1]$, we can identify each integral term as an expectation:
\begin{align*}
    \int_0^1 h_i(u)h_l(u) \,du &= \mathbb{E}[h_i(U)h_l(U)] = \mathbb{E}[g_i(\sigma_i)g_l(\sigma_l)] = \mathbb{E}[g_ig_l], \\
    \int_0^1 h_i(u) \,du &= \mathbb{E}[h_i(U)] = \mathbb{E}[g_i(\sigma_i)] = \mathbb{E}[g_i].
\end{align*}
Substituting these back, the expression becomes:
\begin{align*}
    2 \E[g_i g_l] - 2 (\E[g_i])(\E[g_l])  > 0  
\end{align*}
Since $\mathbb{E}[g_i] = \mathbb{E}_\pi[f_1^i(\sigma_i)] = 0$ for any $i$ by (\ref{zero mean}), this simplifies to
$$ \mathbb{E}[g_i g_l] > 0. $$

\vspace{\baselineskip}
\noindent\textbf{Step 2.} Next, we show that the function $\Psi(p_i, p_l) = \mathbb{E}[g_i(\sigma_i)g_l(\sigma_l)]$ is continuous on the compact domain $\mathcal{P}_\delta \times \mathcal{P}_\delta$. The result then follows immediately.

 Let $(p_i^{(n)}, p_l^{(n)})$ be a sequence in $\mathcal{P}_\delta \times \mathcal{P}_\delta$ converging to $(p_i, p_l)$. The covariance for the $n$-th term in the sequence is
\begin{equation*}
    \Psi(p_i^{(n)}, p_l^{(n)}) = \int_0^1 g_{p_i^{(n)}}(F_{p_i^{(n)}}^{-1}(u)) g_{p_l^{(n)}}(F_{p_l^{(n)}}^{-1}(u))\,du.
\end{equation*}
The function $g_i(s)$ depends on the mean $\mathbb{E}_{p_i}[s]$ and variance $\text{Var}_{p_i}(s)$. Both mean and variance are polynomial (hence continuous) functions of the probability vector $p_i$. The quantile function is a finite sum of indicator functions which converges almost everywhere as $(p_i^{(n)}, p_l^{(n)})$ converges to $(p_i, p_l)$. Thus the integrand converges a.e.\ and is bounded by $1/\delta$, as the function $g_i$ is bounded by $1/\sqrt{\delta}$ uniformly over $p_i \in \mathcal{P}_\delta$ by (\ref{uniform bdd}). Applying the Dominated Convergence Theorem gives the desired result.
\end{proof}

\section{Proof of Theorem~\ref{thm:upper_bound}}
\label{sec:proof_ub}

This section is dedicated to the proof of Theorem~\ref{thm:upper_bound}, which establishes the upper bound on the total variation distance and its asymptotic sharpness. 
First, in Subsection~\ref{subsec:ub_proof}, we prove the main upper bound, showing that for all $n$ and $t$, the worst-case distance satisfies
\[
    D_n(t) \le (k-1)n2^{-t}.
\]
Next, in Subsection~\ref{subsec:ub_sharp_proof}, we demonstrate that this linear scaling is asymptotically sharp. To achieve this, we construct a specific initial distribution and show that for any sequence of times $(t_n)_{n\in\mathbb{N}}$ such that $\lim_{n\to\infty} n2^{-t_n} = s > 0$, there exist constants $c=c(k,\delta)>0$ and $s_0=s_0(k,\delta)>0$ such that for all $s \in (0, s_0)$,
\[
    \liminf_{n\to\infty} D_n(t_n) \ge cs.
\]
Together, these results establish both the upper bound and its sharpness as stated in Theorem~\ref{thm:upper_bound}, thereby completing the proof.

\subsection{Proof of the Upper Bound}
\label{subsec:ub_proof}
With the key tools from Section 3 established, our proof adapts the strategy of \cite{CaputoLabbeLacoin2025}, which relies on bounding the distance by the $L^2$-norm of the density fluctuation.
\begin{proof}
Let us define an estimator $\hat{h}_t^\xi$ for the density $h_t^\xi$ as:
$$
\hat{h}_t^\xi = h_t^\xi - \sum_{i=1}^n \sum_{m=1}^{k-1} q_m^{\xi}(i) f_m^i(\sigma_i).
$$

This estimator is constructed by subtracting the linear terms from the density expansion of $h_t^\xi$. A key property of this construction is that its expectation under the stationary measure $\pi$ remains 1, since $\mathbb{E}_\pi[f_m^i(\sigma_i)] = 0$ for $m \ge 1$ due to orthonormality. That is, $\E_\pi(\hat{h}^\xi_t)=\E_\pi(h^\xi_t)=1$. Furthermore, its expectation over the environment $\xi$ is unchanged because the quenched moments $q_m^\xi(i)$ for $m \ge 1$ have zero mean, i.e., $\mathbb{E}_\xi[q_m^\xi(i)]=0$. Thus, $\E_\xi(\hat{h}^\xi_t)=\E_\xi(h^\xi_t)$.

Therefore, we can express the total variation distance using this new estimator:
\begin{equation*} 
D_n(\mu,t)  =\|\mu_t-\pi\|_{TV} = \frac{1}{2} \| \mathbb{E}_\xi[h_t^\xi] - 1 \|_{L^1(\pi)}  
 =\frac{1}{2}  \| \mathbb{E}_\xi [  \hat{h}_t^\xi] - 1  \|_{L^1(\pi)}.
\end{equation*}

By applying Jensen's inequality, we can move the expectation inside the norm, which yields:

\begin{equation*} 
D_n(\mu,t) \le \frac{1}{2} \mathbb{E}_\xi \left[ \left\| \hat{h}_t^\xi - 1 \right\|_{L^1(\pi)} \right].
\end{equation*}

We now proceed by deriving two different bounds for the term $\| \hat{h}_t^\xi - 1 \|_{L^1(\pi)}$.

\medskip

\noindent \textbf{Step 1.} Deriving the first bound of $\| \hat{h}_t^\xi - 1 \|_{L^1(\pi)}$.

First, we use the definition of $\hat{h}_t^\xi$ and the triangle inequality. Since $h_t^\xi$ is a density, its $L^1(\pi)$ norm is 1.
\begin{align*}
\left\| \hat{h}_t^\xi - 1 \right\|_{L^1(\pi)} &= \left\| h_t^\xi - 1 - \sum_{i=1}^n \sum_{m=1}^{k-1} q_m^{\xi}(i) f_m^i(\sigma_i) \right\|_{L^1(\pi)} \\
&\le  \int |h_t^\xi| d\pi + \int \left|\sum_{i=1}^n \sum_{m=1}^{k-1} q_m^{\xi}(i) f_m^i(\sigma_i)\right| d\pi + \int |1| d\pi \\
&= 2 + \int \left|\sum_{i=1}^n \sum_{m=1}^{k-1} q_m^{\xi}(i) f_m^i(\sigma_i)\right| d\pi.
\end{align*}
The integral term can be bounded using the Cauchy-Schwarz inequality. The orthonormality of the basis functions $\{f_m^i\}$ under the product measure $\pi$ greatly simplifies the resulting $L^2$-norm:
\begin{align*}
\int \left|\sum_{i=1}^n \sum_{m=1}^{k-1} q_m^{\xi}(i) f_m^i(\sigma_i)\right| d\pi  \le \sqrt{\int \left(\sum_{i=1}^n \sum_{m=1}^{k-1} q_m^{\xi}(i) f_m^i(\sigma_i)\right)^2 d\pi}   
=  \sqrt{\sum_{i=1}^n \sum_{m=1}^{k-1} (q_m^{\xi}(i))^2}.
\end{align*}
Substituting this back, we obtain our first bound on the distance:
\begin{equation}
\label{bound1}
    D_n(\mu,t)\leq \frac{1}{2} \mathbb{E}_\xi \left[ 2 + \sqrt{\sum_{i=1}^n \sum_{m=1}^{k-1} (q_m^{\xi}(i))^2} \right].
\end{equation}

\medskip

\noindent \textbf{Step 2.} Deriving the second bound of $\| \hat{h}_t^\xi - 1 \|_{L^1(\pi)}$.

Alternatively, we can relate the $L^1$-norm to the $L^2$-norm at an earlier stage. Since $\pi$ is a probability measure, $\|g\|_{L^1(\pi)} \le \|g\|_{L^2(\pi)}$. This gives:
$$
\| \hat{h}_t^\xi - 1 \|_{L^1(\pi)}^2 \le \| \hat{h}_t^\xi - 1 \|_{L^2(\pi)}^2=\E_\pi [ (\hat{h}_t^\xi)^2 ] - 1,
$$

where the final equality holds because $\mathbb{E}_\pi[\hat{h}_t^\xi]=1$. We now compute $\mathbb{E}_\pi [ (\hat{h}_t^\xi)^2 ]$. Expanding the square and using the orthonormality of the basis functions to evaluate the expectations of the cross-terms, we find:

\begin{align*}
\mathbb{E}_\pi [ (\hat{h}_t^\xi)^2 ] &= \mathbb{E}_\pi \left[ \left( h_t^\xi - \sum_{i=1}^n \sum_{m=1}^{k-1} q_m^{\xi}(i) f_m^i(\sigma_i) \right)^2 \right] \\
&= \mathbb{E}_\pi[(h_t^\xi)^2] - 2\mathbb{E}_\pi\left[h_t^\xi \sum_{i,m} q_m^\xi(i)f_m^i(\sigma_i)\right] + \mathbb{E}_\pi\left[\left(\sum_{i,m} q_m^\xi(i)f_m^i(\sigma_i)\right)^2\right] \\
&=\prod_{i=1}^n \left(1 + \sum_{m=1}^{k-1} (q_m^{\xi}(i))^2 \right) - 2 \sum_{i=1}^n \sum_{m=1}^{k-1} (q_m^{\xi}(i))^2 + \sum_{i=1}^n \sum_{m=1}^{k-1} (q_m^{\xi}(i))^2  \\
&= \prod_{i=1}^n \left(1 + \sum_{m=1}^{k-1} (q_m^{\xi}(i))^2 \right) - \sum_{i=1}^n \sum_{m=1}^{k-1} (q_m^{\xi}(i))^2.
\end{align*}

Using the inequality $1+x \le e^x$ to bound the product term, we arrive at:

$$
\mathbb{E}_\pi [ (\hat{h}_t^\xi)^2 ] \le \exp\left(\sum_{i=1}^n \sum_{m=1}^{k-1} (q_m^{\xi}(i))^2 \right) - \sum_{i=1}^n \sum_{m=1}^{k-1} (q_m^{\xi}(i))^2.
$$
This provides our second bound on the distance:
 \begin{equation}
 \label{bound2}
     D_n(\mu,t) \le \frac{1}{2} \E_\xi \left[\sqrt{ \exp\left(\sum_{j=1}^n \sum_{m=1}^{k-1} (q_m^{\xi}(j))^2 \right) - \sum_{i=1}^n \sum_{m=1}^{k-1} (q_m^{\xi}(i))^2 - 1 }\right].
 \end{equation}

\medskip
 
\noindent \textbf{Step 3.} Combining two Bounds

Let $A_\xi = \sum_{j=1}^n \sum_{m=1}^{k-1} (q_m^{\xi}(j))^2$. Our two bounds (\ref{bound1}) and (\ref{bound2}) imply that
$$
D_n(\mu,t) \le \frac{1}{2} \mathbb{E}_\xi \left[ \min\left(2+\sqrt{A_\xi}, \sqrt{e^{A_\xi}-A_\xi-1}\right) \right].
$$

We use the inequality $\min(2+\sqrt{a}, \sqrt{e^a-a-1})\leq 2a$ for any $a>0$. This allows us to uniformly bound the expression inside the expectation:
\begin{align*}
D_n(\mu,t) &\le \frac{1}{2} \mathbb{E}_\xi \left[ 2 \sum_{j=1}^n \sum_{m=1}^{k-1} (q_m^{\xi}(j))^2 \right] = \sum_{j=1}^n \sum_{m=1}^{k-1} \mathbb{E}_\xi [(q_m^{\xi}(j))^2].
\end{align*}

Finally, from the properties of the quenched moments established in Section 3.3, we know that $\mathbb{E}_\xi [(q_m^{\xi}(j))^2] = 2^{-t}$. Substituting this in gives the desired result:

\begin{align*}
D_n(\mu,t) &\le \sum_{j=1}^n \sum_{m=1}^{k-1} 2^{-t} = n (k-1) 2^{-t}.
\end{align*}
This bound holds for any initial distribution $\mu \in \mathcal{P}^{(n)}$. Taking the supremum over all such $\mu$ on the left-hand side gives the bound for the worst-case distance $D_n(t)$ and concludes the proof.

\end{proof}

\subsection{Proof of the Asymptotic Sharpness}
\label{subsec:ub_sharp_proof}
The approach to demonstrating the asymptotic sharpness of the upper bound differs from that of \cite{CaputoLabbeLacoin2025}. Their proof was contingent on an explicit convergence profile for the monochromatic initial state, which is not readily available in the general inhomogeneous setting. We therefore employ a direct method by constructing a globally comonotonic initial distribution and analyzing the leading-order fluctuations of the evolved density. This allows us to establish the required linear lower bound without relying on a specific convergence formula, confirming that the sharpness persists beyond the homogeneous case.

\begin{proof}
To establish a lower bound on the worst-case distance, it suffices to construct a single initial distribution $\mu$ for which the bound holds. We choose the initial distribution $\mu$ constructed via a global comonotonic coupling.
The total variation distance is given by $D_n(\mu,t_n) = \frac{1}{2}\E_\pi[|\rho_{t_n}(\sigma) - 1|]$, where $\rho_{t_n}(\sigma) := \mathbb{E}_\xi[h^\xi_{t_n}(\sigma)]$. We analyze the fluctuation $Q(\sigma) := \rho_{t_n}(\sigma) - 1$, which can be expressed as
\[
Q(\sigma) = \E_\xi \left[ \prod_{i=1}^n \left( 1 + \sum_{m=1}^{k-1} q_m^\xi(i) f_m^i(\sigma_i) \right) - 1 \right].
\]
By expanding the product and using the linearity of expectation, we can decompose $Q(\sigma)$ into a sum of contributions from different degrees of interaction. Let $Q_d(\sigma)$ be the term corresponding to interactions among exactly $d$ sites:
\[
Q_d(\sigma) := \sum_{A \subset [n], |A|=d} \E_\xi \left[ \prod_{i \in A} \left( \sum_{m_i=1}^{k-1} q_{m_i}^\xi(i) f_{m_i}^i(\sigma_i) \right) \right].
\]
Then, $Q(\sigma) = \sum_{d=1}^n Q_d(\sigma)$. Note that $Q_1(\sigma) = 0$ since $\E_\xi[q_m^\xi(i)]=0$ for $m \ge 1$ by Lemma~\ref{lem:properties of QM} (2).
Our proof proceeds via the following three steps:
\begin{enumerate}
    \item Show that $\E_\pi[Q_2^2] = \Omega(s^2)$.
    \item Show that $\E_\pi[Q_2^4] = O(s^4)$.
    \item Show that $\sum_{d=3}^{n}\sqrt{\E_\pi\left[Q_d^2\right]}=O(s^2)$
\end{enumerate}
Once established, the lower bound on the total variation distance follows directly:
\begin{align*}
D_n(\mu,t_n) = \frac{1}{2}\E_\pi[|Q|] &\ge \frac{1}{2}\E_\pi\left[|Q_2|\right] -\frac{1}{2}\sum_{d=3}^{n}\E_\pi\left[|Q_d|\right]\\
&\ge \frac{1}{2} \sqrt{\frac{1}{2}\E_\pi[Q_2^2]} \cdot \p_\pi\left(Q_2^2 > \frac{1}{2}\E_\pi[Q_2^2]\right) -\frac{1}{2}\sum_{d=3}^{n}\sqrt{\E_\pi\left[Q_d^2\right]} \\
&= \Omega(s)-O(s^2)=\Omega(s).
\end{align*}
Let us analyze the scaling of $\E_\pi[Q_d^2]$ for a general degree $d$.
\[
\E_\pi[Q_d^2] = \sum_{A \subset [n], |A|=d} \sum_{\mathbf{m}_A} \left( \E_\xi \left[ \prod_{i \in A} q_{m_i}^\xi(i) \right] \right)^2.
\]
The magnitude of this term is determined by two factors: the number of summands, which is $\binom{n}{d}(k-1)^d = \Theta(n^d)$, and the size of the moment of quenched moments, $(\E_\xi[\dots])^2$.

 To understand the magnitude of the moment $(\E_\xi[\dots])^2$, we substitute the definition of the quenched moments, $q_m^\xi(i) = \frac{1}{N}\sum_{x=1}^N f_m^i(\xi_i(x))$.
\[
\E_\xi\left[\prod_{j=1}^d q_{m_j}^\xi(i_j)\right] = \frac{1}{N^d} \sum_{x_1=1}^N \dots \sum_{x_d=1}^N \E_\xi\left[ \prod_{j=1}^d f_{m_j}^{i_j}(\xi_{i_j}(x_j)) \right].
\]
Consequently, the expectation of the product of functions factors according to the distinct leaf indices chosen from $\{1, \dots, N\}$. Note that if a specific leaf index $y \in \{1, \dots, N\}$ appears only once in the tuple of indices $(x_1, \dots, x_d)$—that is, if it is a singleton—the corresponding term in the sum contains the factor $\E_\xi[f_{m_j}^{i_j}(\xi_{i_j}(y))] = \E_\mu[f_{m_j}^{i_j}(\sigma_{i_j})] = 0$. Such a term contributes nothing to the total sum.

Let $A_d$ denote the total number of non-singleton assignments.
The total number of such assignments is given by the sum:
\begin{align*}
A_d &= \sum_{j=1}^{\lfloor d/2 \rfloor} c(d, j) \cdot P(N, j) 
\end{align*}
where $c(d, j)$ is the number of ways to partition the set of $d$ leaves into $j$ non-empty blocks, with the constraint that each block must contain at least two leaves. The leading coefficient of $A_d$ is $c(d, \lfloor d/2 \rfloor)$. For even $d$, this corresponds to $j=d/2$, where the partitions are necessarily perfect pairings. The number of such pairings is $c(d, d/2) = (d-1)!!$. Thus, for even $d$,
$$
A_d = \frac{d!}{2^{d/2} (d/2)!} N^{d/2} + O(N^{d/2-1}).
$$
\begin{itemize}
    \item For even $d=2l$, recalling the $1/N^d$ normalization from the definition of the quenched moments, the squared expectation scales as:
    \[
    \left(\E_\xi[\dots]\right)^2 = O\left( \left(A_d / N^d\right)^2 \right) = O\left( \left(N^{d/2} / N^d\right)^2 \right) = O(N^{-d}).
    \]
    Combining this with the $\Theta(n^d)$ number of terms, the overall contribution is bounded by $O(s^d).$
    Crucially, the Lemma~\ref{lem:uniform_cov} ensures that the 2-point correlations corresponding to $f_1$ are bounded away from zero. This guarantees that the moment is not just bounded above, but also bounded below in magnitude. Thus, $\E_\pi[Q_d(\sigma)^2]=\Theta(s^d)$.

    \item For odd $d$, The leading coefficient of $A_d$ is $c(d,  (d-1)/2 )$, implying $A_d = O(N^{(d-1)/2})$. Recalling the $1/N^d$ normalization, the squared expectation scales as:
    \[
    \left(\E_\xi[\dots]\right)^2 = O\left( \left(N^{(d-1)/2} / N^d\right)^2 \right) = O(N^{-d-1}).
    \]
    Combining this with the $\Theta(n^d)$ number of terms, the overall contribution is bounded by $O(s^d / N).$
    
\end{itemize}
Note that the first step $\E_\pi[Q_2^2] = \Omega(s^2)$ is proved.

The contribution of the odd-degree terms of $\sum_{d=3}^{n}\sqrt{\E_\pi\left[Q_d^2\right]}$ is asymptotically negligible compared to that of the even-degree terms. The problem of analyzing $\sum_{d=3}^{n}\sqrt{\E_\pi\left[Q_d^2\right]}$ thus reduces to analyzing the sum of the even-degree components: 
\[
\sum_{l=2}^{n}\sqrt{\E_\pi\left[Q_{2l}^2\right]} = \sum_{l=2}^{\infty}\sqrt{\E_\pi\left[Q_{2l}^2\right]},
\]
where we formally extended the sum to infinity by defining $Q_d(\sigma)$ is identically zero for $d > n$. 
To apply the Reverse Fatou's Lemma, we must find a summable sequence $\{b_l\}_{l \ge 2}$ and an integer $n_0$ such that for all $n \ge n_0$ and all $l \ge 2$, the inequality $\sqrt{\mathbb{E}_\pi[Q_{2l}(\sigma)^2]} \le b_l$ holds.
Observe that
\[
\E_\pi[Q_{2l}(\sigma)^2] \le \binom{n}{2l}(k-1)^{2l} \left( \frac{1}{N^{2l}} \sum_{j=1}^{l} c(2l, j) \cdot P(N, j) \cdot \frac{1}{\delta^{l}}\right)^2.
\]
By the definition of a limit, for any $\epsilon > 0$, there exists an integer $n_0=n_0(\epsilon)$ such that for all $n \ge n_0$, we have the uniform bound $n/N < s+\epsilon$. Using this, along with the relations $\binom{n}{2l} \le \frac{n^{2l}}{(2l)!}$ and $P(N,j) \le N^l$, we can establish a dominating sequence that is independent of $n$. For all $n \ge n_0$, we have:
\[
\E_\pi[Q_{2l}(\sigma)^2] \le \frac{(s+\epsilon)^{2l}}{(2l)!} \left(\frac{k-1}{\delta}\right)^{2l} \left( \sum_{j=1}^{l} c(2l, j) \right)^2 =: b_l^2(s)
\]
To confirm that $\sum b_l(s)$ converges for a certain range of $s$, we apply the ratio test:
\begin{align*}
\lim_{l \to \infty} \frac{b_{l+1}(s)}{b_l(s)} 
&= (s+\epsilon) \left(\frac{k-1}{\delta}\right)  \lim_{l \to \infty}\frac{(2l+1)!!}{(2l-1)!!} \sqrt{\frac{(2l)!}{(2l+2)!}} \\
&= (s+\epsilon) \left(\frac{k-1}{\delta}\right)  \lim_{l \to \infty} \sqrt{\frac{2l+1}{2l+2}} = (s+\epsilon) \left(\frac{k-1}{\delta}\right).
\end{align*}
Hence the radius of convergence is $s< \delta/(k-1)-\epsilon$, so the series $\sum b_l(s)$ converges absolutely for $s$ in this range. If we assume $s < 2\delta/(3k-3)$, and choose $\epsilon = s/2$, then the condition for convergence from the ratio test, $(s+\epsilon)(k-1)/\delta < 1$, is satisfied. This choice fixes our dominating sequence $\{b_l\}$ for all $n \ge n_0(\epsilon)$ and guarantees its summability. Thus, by the Reverse Fatou's Lemma, we prove the third step:
\[
\limsup_{n\to\infty} \sum_{l=2}^{\infty} \sqrt{\mathbb{E}_\pi[Q_{2l}(\sigma)^2]} \le \sum_{l=2}^{\infty} \limsup_{n\to\infty} \sqrt{\mathbb{E}_\pi[Q_{2l}(\sigma)^2]} \le \sum_{l=2}^{\infty} b_l = O((3s/2)^2)=O(s^2).
\]
The bound $O(s^2)$ follows.

 It remains to show the second condition $\E_\pi[Q_2^4] = O(s^4)$.
\begin{align*}
    Q_2(\sigma) 
    &= \sum_{1 \le i < j \le n} \sum_{m_1=1}^{k-1} \sum_{m_2=1}^{k-1} \E_\xi[q_{m_1}^\xi(i)q_{m_2}^\xi(j)] f_{m_1}^i(\sigma_i)f_{m_2}^j(\sigma_j) \\
    &\le \frac{1}{N\delta}\sum_{1 \le i < j \le n} \sum_{m_1=1}^{k-1} \sum_{m_2=1}^{k-1}f_{m_1}^i(\sigma_i)f_{m_2}^j(\sigma_j) := M(\sigma).
\end{align*}
Since $\E_\pi$ is a product measure, the expectation of any term in the expansion of $M(\sigma)^4$ factors by site. Given that $\E_{\pi}[f_m^i(\sigma_i)]=0$ for $m \ge 1$, any term where a site index appears with multiplicity one vanishes. Thus the dominant contribution therefore comes from terms involving four distinct site indices with multiplicity two, which can be chosen in $O(n^4)$ ways. Thus $\E_\pi[Q_2^4] = O(s^4)$.
\end{proof}

\section{Proof of Theorem~\ref{thm: discrete lb}}
\label{sec:proof_lb}

This section provides the proof for Theorem~\ref{thm: discrete lb}, which establishes the exponential lower bound and demonstrates its sharpness.
In Subsection~\ref{subsec:lb_proof}, we construct the main lower bound. Specifically, for any sequence of integers $(t_n)_{n\in\mathbb{N}}$ such that $\lim_{n\to\infty} n 2^{-t_n} = s > 0$, we prove that
\[
    \liminf_{n \to \infty}D_n(t_n) \ge 1 - 2e^{-cs}
\]
for some constant $c = c(k, \delta) > 0$.
Subsequently, in Subsection~\ref{subsec:lb_sharp_proof}, we prove the complementary upper bound that establishes the sharpness of the functional form of our lower bound. We show that for $n2^{-t}\ge\frac{\log2}{2(k-1)}$, the distance is bounded by
\[
    D_n(t) \le 1-\frac{1}{2}e^{-2(k-1)n2^{-t}}.
\]
These two bounds, taken together, immediately yield the statement of Theorem~\ref{thm: discrete lb}.
\subsection{Proof of the Lower Bound}
\label{subsec:lb_proof}
This subsection proves the cutoff lower bound by adapting the ``test event" strategy from \cite{CaputoLabbeLacoin2025}. The core idea is to construct a specific initial distribution $\mu$ and an event $A$ such that the evolved measure $\mu_{t_n}(A)$ is close to 1, while the stationary measure $\pi(A)$ is close to 0. In their two-spin setting, Caputo et al. achieved this by partitioning the coordinates into ``baskets" and using a block-wise monochromatic initial distribution. This setup ensures that the high-magnetization indicators for each basket behave as i.i.d. Bernoulli random variables, making the problem amenable to a standard Chernoff bound analysis. We follow this template, using the comonotonic coupling as the required inhomogeneous analogue of the monochromatic state. The proof is organized as follows: we first define the partitioning and the event, then construct the initial distribution, and finally analyze the probability of the event under both the stationary and evolved measures.

We note that establishing the cutoff lower bound requires analyzing the regime where $s = \lim_{n \to \infty } n2^{-t_n}$ is large. Therefore, our proof will focus on the case where $s$ is sufficiently large. More formally, we will first prove the result under the assumption that $s > C_0$ for an arbitrary constant $C_0 = C_0(k, \delta) > 0$ that is independent of $n$. For the remaining range $s \in (0, C_0]$, the claimed lower bound $1 - 2e^{-cs}$ can be made negative by choosing a sufficiently small constant $c > 0$. Since the total variation distance is always non-negative, the inequality holds trivially in this case. Thus, we can proceed without loss of generality by assuming $s > C_0$.

\subsubsection{Partitioning and Event Definition}
We partition the set of coordinates $[n]$ into $a = \lfloor n/b \rfloor$ blocks (baskets) of size $b=C_02^{t_n}$, denoted $B_1, \dots, B_a$, note that for sufficiently large $n$, we have $a\ge1$ since $s> C_0$. The remaining $n - ab$ coordinates form a leftover block $B_{\text{rem}}$. For each block $j \in \{1, \dots, a\}$, we define its squared magnetization as
\begin{equation*}
    \Xi_j := \left( \sum_{i \in B_j} f^i_1(\sigma_i) \right)^2.
\end{equation*}
For notational convenience, we will denote $f^i_1(\sigma_i)$ simply as $g_i(\sigma_i)$ or $g_i$. 
Let $X_j = \ind_{\{\Xi_j \ge 20b\}}$ be the indicator variable for a block having high magnetization. Let $Z = \sum_{j=1}^a X_j$. We define the event $A$ as
\begin{equation*}
    A := \left\{ Z \ge \frac{a}{15} \right\}.
\end{equation*}
This is the event that at least a fraction $1/15$ of the blocks exhibit high magnetization.

\subsubsection{Initial Distribution $\mu$}
We construct the initial distribution $\mu$ to be a product measure over the blocks, where spins within each block are strongly correlated.
\begin{equation*}
    \mu = \left( \bigotimes_{j=1}^a \mu_j \right) \otimes \pi_{\text{rem}},
\end{equation*}
where $\pi_{\text{rem}}$ is the stationary product measure on the leftover block $B_{\text{rem}}$. Each measure $\mu_j$ on the block $B_j$ is the comonotonic coupling of the marginals $(p_i)_{i \in B_j}$.
The correlation induced by this coupling is strictly positive and uniformly bounded away from zero, as established in Lemma~\ref{lem:uniform_cov}. Note that by the Cauchy-Schwarz inequality and the orthonormality of the basis functions, we have the bounds 
\[
0<\rho \le\mathbb{E}[g_ig_l] \le (\mathbb{E}[g_i^2])^{1/2}(\mathbb{E}[g_l^2])^{1/2}=(\langle g_i, g_i \rangle_{p_i})^{1/2}(\langle g_l, g_l \rangle_{p_l})^{1/2} = 1
\] 
which ensures $0 < \rho \le 1$. This constant is the basis for defining the threshold $C_0 = 80/\rho$. For notational convenience, we will denote $f^i_1(\sigma_i)$ simply as $g_i(\sigma_i)$ or $g_i$.

\subsubsection{Analysis under the Stationary Distribution $\pi$}
Under the stationary measure $\pi$, the random variables $g_i(\sigma_i)$ are independent with mean 0 and variance 1.
By Chebyshev's inequality,
\begin{equation*}
    \pi(\Xi_j \ge 20b) = \pi\left( \left(\sum_{i \in B_j} g_i(\sigma_i)\right)^2 \ge 20b \right) \le \frac{\text{Var}_\pi(\sum g_i)}{20b} = \frac{b}{20b} = \frac{1}{20}.
\end{equation*}
The indicator variables $X_j$ are i.i.d. Bernoulli random variables with parameter $p = \pi(X_j=1) < 1/20$. By Chernoff's bound, for sufficiently large $n$, there exists a constant $c_1 > 0$ such that $\pi(A) \le e^{-c_1 a}$. Taking the limit $n\to\infty$ with $n2^{-{t_n}} \to s$, by adjusting the constant $c_1$, we obtain the required inequality:
\begin{equation}
\label{lem:lb stationary}
    \limsup_{n\to \infty}{\pi(A)} \le e^{-c_1s} 
\end{equation}

\subsubsection{Analysis under the Evolved Distribution $\mu_{t_n}$}
The first moment of the squared magnetization under $\mu_{t_n}$ is:
\begin{align*}
    \E_{\mu_{t_n}}[\Xi_j] &= \E_{\mu_{t_n}}\left[ \left( \sum_{i \in B_j} g_i(\sigma_i) \right)^2 \right]
    = \sum_{i \in B_j} \E_{\mu_{t_n}}[g_i^2] + \sum_{i \ne l \in B_j} \E_{\mu_{t_n}}[g_i g_l] \\
    &= b + 2^{-{t_n}} \sum_{i \ne l \in B_j}\E_\mu(g_ig_l) 
    \ge b + b(b-1)2^{-{t_n}} \rho.
\end{align*}
By choosing $C_0=80/\rho$, we have $b=C_02^{t_n}\ge80\cdot2^{t_n}/\rho\ge80$. Thus we can ensure $\E_{\mu_{t_n}}[\Xi_j] \ge 40b$.
The second moment $\E_{\mu_{t_n}}(\Xi_j^2)$ can be expanded by categorizing the terms in $\E_{\mu_{t_n}}\left( \left( \sum_{i \in B_j} g_i(\sigma_i) \right)^4 \right)$ based on how the indices coincide:

\begin{align*}
\E_{\mu_{t_n}}[\Xi_j^2] = & \sum_{\substack{i,k,l,m \in B_j \\ \text{distinct}}} \E_{\mu_{t_n}}[g_i g_k g_l g_m]  + 6 \sum_{\substack{i,k,l \in B_j \\ \text{distinct}}} \E_{\mu_{t_n}}[g_i^2 g_k g_l]  + 3 \sum_{\substack{i,k \in B_j \\ i \neq k}} \E_{\mu_{t_n}}[g_i^2 g_k^2]  \\ &+ 4 \sum_{\substack{i,k \in B_j \\ i \neq k}} \E_{\mu_{t_n}}[g_i^3 g_k]  + \sum_{i \in B_j} \E_{\mu_{t_n}}[g_i^4].
\end{align*}
By considering the underlying fragmentation process, the expectation $\E_{\mu_{t_n}}[\Xi_j^2]$ can be expanded in terms of the initial distribution $\mu$ as follows:

\begin{equation*}
\begin{split}
    \E_{\mu_{t_n}}[\Xi_j^2] = & \sum_{\substack{i,k,l,m \in B_j \\ \text{distinct}}} \bigg[ 2^{-3{t_n}} \E_\mu[g_i g_k g_l g_m]  + 3(1 - 2^{-{t_n}}) 2^{-2{t_n}} \E_\mu[g_i g_k] \E_\mu[g_l g_m]  \bigg] \\
    & + 6 \sum_{\substack{i,k,l \in B_j \\ \text{distinct}}} \Big[ 2^{-2{t_n}} E_\mu[g_i^2 g_k g_l] + (2^{-{t_n}} - 2^{-2{t_n}}) \E_\mu[g_i^2] \E_\mu[g_k g_l] \Big] \\
    & + 3 \sum_{\substack{i,k \in B_j \\ i \neq k}} \Big[ 2^{-{t_n}} \E_\mu[g_i^2 g_k^2] + (1 - 2^{-{t_n}}) \E_\mu[g_i^2] \E_\mu[g_k^2] \Big] \\
    & + 4 \sum_{\substack{i,k \in B_j \\ i \neq k}} \Big[ 2^{-{t_n}} \E_\mu[g_i^3 g_k] \Big]  + \sum_{i \in B_j} \E_\mu[g_i^4].
\end{split}
\end{equation*}
To bound this expression, we use the uniform boundedness of the basis functions, $|g_i| \le 1/\sqrt{\delta}$. This yields:

\begin{equation*}
\begin{split}
\E_{\mu_{t_n}}[\Xi_j^2] \le & \ b(b-1)(b-2)(b-3) \cdot 2^{-3{t_n}} \frac{1}{\delta^2} + (1 - 2^{-{t_n}}) 2^{-2{t_n}} \cdot 3 \left(\sum_{\substack{i,l \in B_j \\ i \neq l}}\E_\mu[g_i g_l] \right)^2 \\
& + 6 b(b-1)(b-2) \left[ 2^{-2{t_n}} \frac{1}{\delta^2} \right] + 6b(2^{-{t_n}} - 2^{-2{t_n}}) \sum_{\substack{i,l \in B_j \\ i \neq l}}\E_\mu[g_i g_l] \\
& + 3b(b-1) \left[ 2^{-{t_n}} \frac{1}{\delta^2} + (1 - 2^{-{t_n}}) \right] + 4 b(b-1) \left[ 2^{-{t_n}} \frac{1}{\delta^2} \right] + \frac{b}{\delta^2}
\end{split}
\end{equation*}
A comparison of the leading-order terms in the expansions of $E_{\mu_{t_n}}[\Xi_j^2]$ and $(E_{\mu_{t_n}}[\Xi_j])^2$ directly yields the inequality $E_{\mu_{t_n}}[\Xi_j^2] \le 3 (E_{\mu_{t_n}}[\Xi_j])^2 + O(b)$. For sufficiently large $n$, the $O(b)$ term is absorbed by the $0.1 (E_{\mu_{t_n}}[\Xi_j])^2$ term. We thus arrive at the following bound:
\begin{equation*}
\E_{\mu_{t_n}}[\Xi_j^2] \le 3.1 (\E_{\mu_{t_n}}[\Xi_j])^2.
\end{equation*}
Applying the Paley-Zygmund inequality then provides a bound for $\E_{\mu_{t_n}}(X_j=1)$:
\begin{align*}
    \mu_{t_n}(\Xi_j \ge 20b) \ge \mu_{t_n}(\Xi_j \ge \frac{1}{2} \E_{\mu_{t_n}}[\Xi_j]) 
    \ge \frac{(\E_{\mu_{t_n}}[\Xi_j])^2}{4\E_{\mu_{t_n}}[\Xi_j^2]} 
    \ge \frac{1}{12.4}.
\end{align*}
By Chernoff's bound, for sufficiently large $n$, we find that for some constant $c_2 > 0$, $\mu_{t_n}(A^c) \le e^{-c_2 a}$. Taking the limit $n\to\infty$ with $n2^{-{t_n}} \to s$, we obtain the required lower bound for $\mu_{t_n}(A)$:
\begin{equation}
\label{lem:lb evolved}
    \liminf_{n\to \infty}\mu_{t_n}(A) \ge 1-e^{-c_2 s} 
\end{equation}

\begin{proof}[Proof of the cutoff lower bound]
We now combine the established bounds to prove (\ref{lb}). By the definition of total variation distance, for the specific event $A$ constructed in Section 5.1, we have:
\[
D_n({t_n})\ge D_n(\mu,{t_n}) = \sup_{B \subseteq \Omega_n} \lvert \mu_{t_n}(B) - \pi(B) \rvert \ge \lvert \mu_{t_n}(A) - \pi(A) \rvert.
\]
Taking the $\liminf$ as $n\to\infty$ on both sides, we can apply the bounds derived in (\ref{lem:lb stationary}) and (\ref{lem:lb evolved}):
\begin{align*}
    \liminf_{n\to\infty} D_n({t_n}) &\ge \liminf_{n\to\infty} (\mu_{t_n}(A) - \pi(A)) \ge \liminf_{n\to\infty} \mu_{t_n}(A) - \limsup_{n\to\infty} \pi(A) \\
    &\ge (1 - e^{-c_2 s}) - (e^{-c_1 s}) = 1 - (e^{-c_1 s} + e^{-c_2 s}).
\end{align*}
Let $c = \min(c_1, c_2)$. Then $e^{-c_1 s} + e^{-c_2 s} \le 2e^{-cs}$. This yields a bound of the form $1 - 2e^{-cs}$ for $s > C_0$.
As argued at the outset of this proof, the bound holds trivially for $s \in (0, C_0]$ by adjusting $c$ if necessary. Therefore, the inequality holds for all $s > 0$. This completes the proof.
 \end{proof}
 
\subsection{Proof of the Asymptotic Sharpness}
\label{subsec:lb_sharp_proof}

\begin{proof}
 Instead of using only the Schwarz inequality to bound the $L^1(\pi)$ norm by the $L^2(\pi)$ norm, we employ the finer inequality presented in \cite[Appendix B]{CaputoLabbeLacoin2025}. This inequality states that for any density $f$ with respect to $\pi$,
\begin{equation} \label{eq:phi_inequality}
    \frac{1}{2} \|f - 1\|_{L^1(\pi)} \le \phi(\|f - 1\|_{L^2(\pi)}),
\end{equation}
where the function $\phi: \mathbb{R}_+ \to [0,1)$ is defined as
\[
\phi(x) = 
\begin{cases}
    x/2, & \text{if } x < 1, \\
    1 - \frac{1}{1+x^2}, & \text{if } x \ge 1.
\end{cases}
\]
The total variation distance can be bounded using the inequality \eqref{eq:phi_inequality}:
\begin{align*}
    \|\mu_t - \pi\|_{\mathrm{TV}} &= \frac{1}{2} \|\E_\xi[h_t^\xi - 1]\|_{L^1(\pi)} \le \frac{1}{2} \E_\xi[\|h_t^\xi - 1\|_{L^1(\pi)}] \le \E_\xi[\phi(\|h_t^\xi - 1\|_{L^2(\pi)})].
\end{align*}
The squared $L^2$ norm of the quenched density fluctuation is given by
\[
\|h_t^\xi - 1\|_{L^2(\pi)}^2 = \E_\pi[(h_t^\xi)^2] - 1 = \prod_{i=1}^n \left(1 + \sum_{m=1}^{k-1} (q_m^\xi(i))^2\right) - 1.
\]
Let us define the random variable $A_\xi := \sum_{i=1}^n \sum_{m=1}^{k-1} (q_m^\xi(i))^2$. Using the inequality $1+x \le e^x$, we can bound the norm:
\[
\|h_t^\xi - 1\|_{L^2(\pi)}^2 \le \exp(A_\xi) - 1.
\]
Since $\phi$ is a non-decreasing function, we have
\[
\|\mu_t - \pi\|_{\mathrm{TV}} \le \E_\xi[\phi(\sqrt{\exp(A_\xi) - 1})].
\]
The expectation of $A_\xi$ is $S = n(k-1)2^{-t}$. By Markov's inequality, we have $\Prob(A_\xi \ge 2S) \le 1/2$.
We now split the expectation based on the event $\{A_\xi \ge 2S\}$. Since $\phi$ is non-decreasing and bounded by 1, and provided $n2^{-t}\ge \frac{\log2}{2(k-1)}$, we obtain
\begin{align*}
    \E_\xi[\phi(\sqrt{\exp(A_\xi) - 1})] &= \E_\xi[\phi(\dots)\mathbf{1}_{\{A_\xi \ge 2S\}}] + \E_\xi[\phi(\dots)\mathbf{1}_{\{A_\xi < 2S\}}] \\
    &\le \Prob(A_\xi \ge 2S) \cdot 1 + \E_\xi[\phi(\sqrt{\exp(A_\xi) - 1})\mathbf{1}_{\{A_\xi < 2S\}}] \\
    &\le \Prob(A_\xi \ge 2S) + \Prob(A_\xi < 2S) \cdot \phi(\sqrt{\exp(2S) - 1}).\\
    &\le 1-\frac{1}{2}\exp(-2(k-1)n2^{-t}).
\end{align*}
\end{proof}

\section{The Cutoff Profile for Monochromatic Initial States}
\label{sec:explicit profile}
In this section, we focus on a \textbf{homogeneous system}, where all single-site marginal distributions are identical, i.e., $p_i = p$ for all sites $i \in [n]$. We begin by formally defining the key vector notations.

With the machinery developed in Section 3, the proof of the cutoff profile follows a path similar to that in \cite{CaputoLabbeLacoin2025}, which relies on analyzing the asymptotic distribution of the evolved density.

\begin{definition}[Quenched Moment Vector]
For a given realization of the environment $\xi = \{\xi(x)\}_{x=1}^N$, the quenched moment for the $m$-th basis function is constant across all sites $i$, since $\xi_i(x)$ is independent of $i$. We define the $(k-1)$-dimensional quenched moment vector $\qvec^\xi$ as:
\[
\qvec^\xi := \frac{1}{N} \sum_{x=1}^N \fvec(\xi(x)),
\]
where $\fvec(s) := (f_1(s), \dots, f_{k-1}(s))^T$. Thus, $\qvec^\xi$ is a random vector representing an average of $N$ i.i.d. random vectors.
\end{definition}

\begin{definition}[Evolved Density]
The density $\rho_t(\sigma)$ is given by:
\[
\rho_t(\sigma) = \E_\xi \left[ \prod_{i=1}^n \left(1 + \qvec^\xi \cdot \fvec(\sigma_i)\right) \right].
\]
\end{definition}
 Our analysis relies on two Central Limit Theorems. First, for the stationary fluctuations, the vector $\bar{\mathbf{f}}_n(\sigma) := \frac{1}{\sqrt{n}}\sum \mathbf{f}(\sigma_i)$ converges in distribution to a standard normal $\mathcal{N}(0, I_{k-1})$. Second, for the quenched moments, the scaled vector $\sqrt{n}\mathbf{q}^\xi$ converges in distribution to $\mathcal{N}(0, sI_{k-1})$, as it is an average of $N=2^{t_n}$ i.i.d. vectors with covariance $I_{k-1}$ and the scaling factor is $\sqrt{n/N} \to \sqrt{s}$.
\begin{remark}
    
        The analytical framework of this section hinges on a crucial simplification: reducing the analysis from the high-dimensional distribution of the configuration $\sigma \in \Omega_n$ to the distribution of a single summary statistic, the stationary fluctuation vector $\bar{\mathbf{f}}_n(\sigma)$. This reduction is possible because the evolved measure $\mu_t$ is \textbf{exchangeable}---the joint probability of a configuration is invariant under any permutation of its site indices.
        
        More precisely, this means that conditional on a given vector of a spin proportion, the measure $\mu_t$ is uniform over the set of all configurations sharing that proportion. Consequently, the quenched density $h_t^\xi(\sigma)$ depends on a configuration $\sigma$ only through its empirical spin counts $\{n_l(\sigma)\}$. It is this fundamental symmetry that allows us to focus our analysis on the fluctuations of these empirical counts, which are fully captured by the vector $\bar{\mathbf{f}}_n(\sigma)$.
        
        This approach is not viable for the general inhomogeneous case. There, the lack of exchangeability stems from the fact that the site-specific marginals $p_i$ give rise to site-dependent quenched moments $q_m^\xi(i)$. As a result, the quenched density $h_t^\xi(\sigma)$ depends on the specific spin at each individual site, not just the overall counts, preventing such a dimensional reduction and forcing a direct analysis of the entire configuration's joint distribution. \hfill$\lozenge$
    
\end{remark}

To analyze the asymptotic behavior of the evolved density $\rho_t(\sigma)$ with its single summary statistic $\bar{\mathbf{f}}_n(\sigma)$, we separate the quenched density $h_t^\xi(\sigma)$ into its dependencies on the quenched moment vector $\qvec^\xi$ and the stationary fluctuation vector $\bar{\fvec}_n(\sigma)$.
The logarithm of the quenched density is given by:
\begin{equation*}
\log h_t^\xi(\sigma) = \sum_{i=1}^n \log\left(1 + \qvec^\xi \cdot \fvec(\sigma_i)\right).
\end{equation*}
Let $n_l(\sigma)$ be the number of sites $i$ where the spin is $s_l$, so that $\sum_{l=0}^{k-1} n_l(\sigma) = n$. We can rewrite the log-density in terms of these counts:
\begin{equation*}
\log h_t^\xi(\sigma) = \sum_{l=0}^{k-1} n_l(\sigma) \log\left(1 + \qvec^\xi \cdot \fvec(s_l)\right).
\end{equation*}
Let $\Gvec(s) = (f_0(s), \dots, f_{k-1}(s))^T$. The corresponding stationary fluctuation vector is $\bar{\Gvec}_n(\sigma) = \frac{1}{\sqrt{n}}\sum_{i=1}^n \Gvec(\sigma_i)$. Note that its first component is $(\bar{\Gvec}_n(\sigma))_0 = \sqrt{n}$, and the remaining components form the vector $\bar{\fvec}_n(\sigma)$.

 The fluctuation vector can be expressed using the empirical frequencies $\tilde{p}_l(\sigma) = n_l(\sigma)/n$:
\[
\bar{\Gvec}_n(\sigma) = \frac{1}{\sqrt{n}} \sum_{l=0}^{k-1} n_l(\sigma) \Gvec(s_l) = \sqrt{n} \sum_{l=0}^{k-1} \tilde{p}_l(\sigma) \Gvec(s_l).
\]
Let $\mathcal{M}$ be the $k \times k$ matrix whose columns are the vectors $\Gvec(s_l)$, i.e., $\mathcal{M}_{ml} = f_m(s_l)$. Since $\{f_m\}_{m=0}^{k-1}$ forms a basis for the space of functions on $S$, the matrix $\mathcal{M}$ is invertible. Note that $\mathcal{M}^{-1} = P \mathcal{M}^T $ where $P$ is the diagonal matrix with entries $P_{ll} = p(s_l)$. This allows us to express the empirical frequencies in terms of the fluctuation vector:
\[
\sqrt{n} \, \tilde{\mathbf{p}}(\sigma) = \mathcal{M}^{-1} \bar{\Gvec}_n(\sigma),
\]
where $\tilde{\mathbf{p}}(\sigma) = (\tilde{p}_0(\sigma), \dots, \tilde{p}_{k-1}(\sigma))^T$.

 Substituting this back into the expression for the log-density, we obtain the desired separation:
\begin{align*}
\log h_t^\xi(\sigma) &= n \sum_{l=0}^{k-1} \tilde{p}_l(\sigma) \log\left(1 + \qvec^\xi \cdot \fvec(s_l)\right) = \sqrt{n} \sum_{l=0}^{k-1} \left(\mathcal{M}^{-1} \bar{\Gvec}_n(\sigma)\right)_l \log\left(1 + \qvec^\xi \cdot \fvec(s_l)\right) \\
&= \sum_{m=0}^{k-1} (\bar{\Gvec}_n(\sigma))_m \left( \sqrt{n} \sum_{l=0}^{k-1} (\mathcal{M}^{-1})_{ml} \log\left(1 + \qvec^\xi \cdot \fvec(s_l)\right) \right) = \beta_n^\xi + \boldsymbol{\alpha}_n^\xi \cdot \bar{\fvec}_n(\sigma),
\end{align*}
where we have defined the scalar $\beta_n^\xi$ and the $(k-1)$-dimensional vector $\boldsymbol{\alpha}_n^\xi$ as:
\begin{align*}
\beta_n^\xi &:= n \sum_{l=0}^{k-1} p(s_l) \log\left(1 + \qvec^\xi \cdot \fvec(s_l)\right)\\
\boldsymbol{\alpha}_n^\xi &:= \sqrt{n} \sum_{l=0}^{k-1} p(s_l) \fvec(s_l) \log\left(1 + \qvec^\xi \cdot \fvec(s_l)\right).
\end{align*}
This yields the final expression for the quenched density:
\begin{equation}
\label{seperation}
h_t^\xi(\sigma) = \exp\left( \boldsymbol{\alpha}_n^\xi \cdot \bar{\fvec}_n(\sigma) +\beta_n^\xi  \right).
\end{equation}
The joint convergence in distribution of $(\boldsymbol{\alpha}_n^\xi, \beta_n^\xi)$ towards $(\mathbf{Z}_s, -\frac{1}{2}\|\mathbf{Z}_s\|^2)$, where $\mathbf{Z}_s \sim \mathcal{N}(\mathbf{0}, sI_{k-1})$, is a direct consequence of a second-order Taylor expansion of the logarithms in their definitions, combined with the orthonormality of the basis $\{f_0,f_1, \dots ,f_{k-1}\}$ and the Central Limit Theorem for Quenched Moments.

 We aim to show that $\rho_{t_n}(\sigma)$, viewed as a random variable on the probability space $(\Omega_n, \pi)$, also converges in distribution to a well-defined limit. This is formally stated in the following proposition.
\begin{proposition}
\label{prop:density_distribution_convergence}
Let $F: \mathbb{R}_+ \to \mathbb{R}$ be any bounded, continuous function. Then,
\[
\lim_{n \to \infty} \int_{\Omega_n} F(\rho_{t_n}(\sigma)) \, \pi(d\sigma) = \mathbb{E}_{\mathbf{Z} \sim \mathcal{N}(\mathbf{0}, I_{k-1})} \left[ F(\psi(\mathbf{Z})) \right],
\]
where the limiting function $\phi: \mathbb{R}^{k-1} \to \mathbb{R}$ is defined as
\[
\psi(\mathbf{u}) := \mathbb{E}_{\mathbf{Z_s} \sim \mathcal{N}(\mathbf{0}, sI_{k-1})} \left[ \exp\left(\mathbf{Z}_s \cdot \mathbf{u} - \frac{1}{2}\|\mathbf{Z}_s\|^2\right) \right].
\]
\end{proposition}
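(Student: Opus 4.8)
The plan is to combine the representation \eqref{seperation} with the stated joint convergence $(\boldsymbol{\alpha}_n^\xi,\beta_n^\xi)\Rightarrow(\mathbf{Z}_s,-\tfrac12\|\mathbf{Z}_s\|^2)$ and a standard central limit theorem, then pass to the limit under the expectation by a uniform integrability argument. Writing $\rho_{t_n}(\sigma)=\E_\xi[h_{t_n}^\xi(\sigma)]=\E_\xi\big[\exp(\boldsymbol{\alpha}_n^\xi\cdot\bar{\fvec}_n(\sigma)+\beta_n^\xi)\big]$, the first observation is that, conditionally on $\sigma$, the $\xi$-expectation only sees the fixed vector $\mathbf{u}=\bar{\fvec}_n(\sigma)$; so once we know the joint law of $(\boldsymbol{\alpha}_n^\xi,\beta_n^\xi)$ converges to $(\mathbf{Z}_s,-\tfrac12\|\mathbf{Z}_s\|^2)$, the continuous mapping theorem together with a domination bound should give, for each fixed $\mathbf{u}$, that $\E_\xi[\exp(\boldsymbol{\alpha}_n^\xi\cdot\mathbf{u}+\beta_n^\xi)]\to\psi(\mathbf{u})$. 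The second ingredient is the CLT for the stationary fluctuations: under $\pi$, $\bar{\fvec}_n(\sigma)=\frac{1}{\sqrt n}\sum_{i=1}^n\fvec(\sigma_i)\Rightarrow\mathbf{Z}\sim\mathcal N(\mathbf 0,I_{k-1})$, since the $\fvec(\sigma_i)$ are i.i.d. with mean $\mathbf 0$ (by \eqref{zero mean}) and covariance $I_{k-1}$ (by orthonormality of $\{f_m\}$). Hence it is natural to expect $\rho_{t_n}(\sigma)\approx\psi(\bar{\fvec}_n(\sigma))$ and therefore $\int F(\rho_{t_n})\,d\pi\to\E_{\mathbf Z}[F(\psi(\mathbf Z))]$.

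Carrying this out rigorously, I would proceed as follows. First, record that $\psi$ is continuous (indeed real-analytic) in $\mathbf u$: it is the moment generating function $\E[\exp(\mathbf Z_s\cdot\mathbf u)]$ weighted by $\exp(-\tfrac12\|\mathbf Z_s\|^2)$, which one can compute explicitly — a Gaussian integral gives $\psi(\mathbf u)=(1+s)^{-(k-1)/2}\exp\!\big(\tfrac{s}{2(1+s)}\|\mathbf u\|^2\big)$, showing $\psi$ is finite and continuous everywhere. Second, define $\Phi_n(\mathbf u):=\E_\xi[\exp(\boldsymbol{\alpha}_n^\xi\cdot\mathbf u+\beta_n^\xi)]$ and prove the pointwise limit $\Phi_n(\mathbf u)\to\psi(\mathbf u)$ for each $\mathbf u$, and moreover that this convergence is uniform on compact sets; this uses the joint weak convergence plus a uniform bound ensuring the exponentials are uniformly integrable in $\xi$ — here one exploits that $|q_m^\xi(i)|\le$ something bounded away from controlling $\boldsymbol{\alpha}_n^\xi$, more precisely that the second-order Taylor expansion underlying the stated convergence has a remainder that is uniformly small because $\|\qvec^\xi\|\to 0$ a.s.\ and $|f_m|\le 1/\sqrt\delta$ by \eqref{uniform bdd}; a convenient way to get genuine uniform integrability is to bound the fourth moment $\E_\xi[(h_{t_n}^\xi(\sigma))^{2}]$-type quantity, or equivalently to note $\Phi_n(\mathbf u)=\E_\xi\prod_{i}(1+\qvec^\xi\cdot\fvec(\sigma_i^{\mathbf u}))$ is bounded by $\exp(c\|\mathbf u\|)$ uniformly in $n$ via the product bound $1+x\le e^x$. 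Third, since $\rho_{t_n}(\sigma)=\Phi_n(\bar{\fvec}_n(\sigma))$ is a (deterministic) continuous function of $\bar{\fvec}_n(\sigma)$, and $\bar{\fvec}_n(\sigma)\Rightarrow\mathbf Z$ under $\pi$, I would apply a generalized continuous mapping theorem for a sequence of continuous functions $\Phi_n\to\psi$ uniformly on compacts: this gives $\rho_{t_n}(\sigma)=\Phi_n(\bar{\fvec}_n(\sigma))\Rightarrow\psi(\mathbf Z)$ in distribution. Fourth, since $F$ is bounded and continuous, $F\circ\psi$ is bounded continuous, so weak convergence yields $\int F(\rho_{t_n})\,d\pi\to\E_{\mathbf Z}[F(\psi(\mathbf Z))]$, which is the claim.

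The main obstacle is the second step — upgrading the weak convergence of $(\boldsymbol{\alpha}_n^\xi,\beta_n^\xi)$ to convergence of the $\xi$-expectation $\Phi_n(\mathbf u)=\E_\xi[\exp(\boldsymbol{\alpha}_n^\xi\cdot\mathbf u+\beta_n^\xi)]$, uniformly for $\mathbf u$ in compacts. Weak convergence alone does not control expectations of unbounded functionals like $\exp(\boldsymbol{\alpha}_n^\xi\cdot\mathbf u)$, so one must produce an $n$-independent integrable dominating bound (or a uniform-in-$n$ bound on a higher moment of $\exp(\boldsymbol{\alpha}_n^\xi\cdot\mathbf u+\beta_n^\xi)$), and then invoke a uniform-integrability version of the convergence theorem. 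The cleanest route is probably to bypass the $(\boldsymbol{\alpha}_n^\xi,\beta_n^\xi)$ decomposition for the bound and argue directly: fix $\mathbf u$, pick a deterministic configuration $\sigma$ with $\bar{\fvec}_n(\sigma)=\mathbf u+o(1)$, and write $\Phi_n(\mathbf u)=\E_\xi\prod_{i=1}^n(1+\qvec^\xi\cdot\fvec(\sigma_i))$; then expand the product, use $\E_\xi[(q_m^\xi(i))^2]=2^{-t_n}$, the uniform boundedness of $f_m$, and a dominated/Reverse-Fatou argument analogous to the one already used in Section~\ref{subsec:ub_sharp_proof} to show the series of higher-order terms is summable and converges term-by-term to the Gaussian-integral value $\psi(\mathbf u)$. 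This reduces the whole proposition to combinatorial moment estimates of the type already carried out in the paper, plus the two CLTs, plus the explicit Gaussian computation of $\psi$; the weak-convergence-of-random-functions step then closes the argument.
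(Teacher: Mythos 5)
Your proposal follows essentially the same architecture as the paper's proof: both reduce the problem to showing that $\Phi_n(\mathbf{u}) := \E_\xi[\exp(\boldsymbol{\alpha}_n^\xi\cdot\mathbf{u}+\beta_n^\xi)]$ converges to $\psi$ locally uniformly (Lemma~\ref{lem:local_uniform_convergence}), and then combine this with the CLT for $\bar{\fvec}_n(\sigma)$ under $\pi$ to transfer the convergence to the composite variable $\rho_{t_n}(\sigma)=\Phi_n(\bar{\fvec}_n(\sigma))$; where you invoke an extended continuous mapping theorem, the paper uses the equivalent compact/tail splitting with Hoeffding's inequality (Corollary~\ref{cor:dist_conv_approx_density}). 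The one step where your argument as written slips is the uniform-integrability claim: the bound $\prod_i(1+\qvec^\xi\cdot\fvec(\sigma_i))\le\exp(\sqrt{n}\,\qvec^\xi\cdot\bar{\fvec}_n(\sigma))$ from $1+x\le e^x$ has a \emph{random} exponent that is not deterministically $\le c\|\mathbf{u}\|$, and a uniform bound on the first moment $\Phi_n(\mathbf{u})$ itself does not give uniform integrability of the integrand. This is repairable either along your own fallback (bound a higher moment, e.g.\ $\E_\xi[\exp(2\sqrt{n}\,\qvec^\xi\cdot\mathbf{u})]$, which is uniformly bounded by sub-Gaussianity of the bounded i.i.d.\ average $\qvec^\xi$), or as the paper does: it observes that the exponent $\boldsymbol{\alpha}_n^\xi\cdot\mathbf{u}+\beta_n^\xi$ is maximized at $\qvec^\xi=\mathbf{u}/\sqrt{n}$, yielding a \emph{deterministic} bound $M(A)$ for $\|\mathbf{u}\|\le A$, so that $g_n$ may be replaced by the bounded continuous truncation $g_n\wedge M$ and the weak convergence of $(\boldsymbol{\alpha}_n^\xi,\beta_n^\xi)$ applies directly, with local uniformity then following from equicontinuity and Arzel\`a--Ascoli. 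Your explicit Gaussian computation $\psi(\mathbf{u})=(1+s)^{-(k-1)/2}\exp\bigl(\tfrac{s}{2(1+s)}\|\mathbf{u}\|^2\bigr)$ is correct and is a useful addition, as it makes the identification with the limit in Theorem~\ref{thm:cutoff_profile} transparent.
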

 The proposition states that the random variable $\rho_{t_n}(\sigma)$, where $\sigma \sim \pi$, converges in distribution to the random variable $\psi(\mathbf{Z})$, where $\mathbf{Z} \sim \mathcal{N}(\mathbf{0}, I_{k-1})$. Note that the structure revealed by (\ref{seperation}) is crucial: $\rho_{t_n}(\sigma)$ is a function of the stationary fluctuation vector $\bar{\fvec}_n(\sigma)$ alone. By the Portmanteau Theorem, it is sufficient to prove this convergence for all bounded, Lipschitz continuous functions $F$, which simplifies the subsequent analysis. Assuming this proposition holds, Theorem~\ref{thm:cutoff_profile} follows immediately. 

 \begin{proof}[Proof of Theorem~\ref{thm:cutoff_profile}]
     By applying Proposition~\ref{prop:density_distribution_convergence} to the bounded continuous function $F(x) = 1+x-|x-1|$, and noting that the total mass condition $\int(1+\rho_{t_n})d\pi = 2$ converges to its limit $\mathbb{E}[1+\psi(\mathbf{Z})] = 2$, we obtain the desired convergence of the total variation distance:
\[ 
\lim_{n\to\infty} \|\mu_{t_n} - \pi\|_{\TV} = \|\mathcal{N}(\mathbf{0}, (1+s)I_{k-1}) - \mathcal{N}(\mathbf{0}, I_{k-1})\|_{\TV}. 
\]
 \end{proof}
 
\begin{proof}[Proof of Proposition~\ref{prop:density_distribution_convergence}]
Our proof strategy leverages the fact that $\rho_{t_n}(\sigma)$ is a function of $\bar{\fvec}_n(\sigma)$. The proof thus proceeds in two steps.
First, we show that the sequence of functions $\rho_{t_n}: \mathbb{R}^{k-1} \to \mathbb{R}_+$ converges locally uniformly to a limit function $\psi$ (Lemma~\ref{lem:local_uniform_convergence}).
Subsequently, combining this result with a tail-control argument for the random vector $\bar{\fvec}_n(\sigma)$ (Corollary~\ref{cor:dist_conv_approx_density}), we establish the convergence in distribution of the composite random variable $\rho_{t_n}(\bar{\fvec}_n(\sigma))$ to $\psi(\mathbf{Z})$.
The proofs for the intermediate propositions follow, after which we conclude the main proof.
\end{proof}

\begin{lemma}
\label{lem:local_uniform_convergence}
$\rho_{t_n}: \mathbb{R}^{k-1} \to \mathbb{R}_+$ converges locally uniformly to the limit function $\psi$, i.e., for any compact set $K \subset \mathbb{R}^{k-1}$,
\[
\lim_{n \to \infty} \sup_{\mathbf{u} \in K} |\rho_{t_n}(\mathbf{u}) - \psi(\mathbf{u})| = 0.
\]
\end{lemma}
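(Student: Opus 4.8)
The plan is to analyze the density function $\rho_{t_n}(\mathbf{u})$ directly via the separation identity~\eqref{seperation}. Recall that for a configuration $\sigma$ with $\bar{\fvec}_n(\sigma) = \mathbf{u}$ we have $h_t^\xi = \exp(\boldsymbol{\alpha}_n^\xi \cdot \mathbf{u} + \beta_n^\xi)$, so that $\rho_{t_n}(\mathbf{u}) = \E_\xi[\exp(\boldsymbol{\alpha}_n^\xi \cdot \mathbf{u} + \beta_n^\xi)]$; the candidate limit is $\psi(\mathbf{u}) = \E_{\mathbf{Z}_s}[\exp(\mathbf{Z}_s \cdot \mathbf{u} - \tfrac12\|\mathbf{Z}_s\|^2)]$ with $\mathbf{Z}_s \sim \mathcal{N}(\mathbf{0}, s I_{k-1})$. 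Since $(\boldsymbol{\alpha}_n^\xi, \beta_n^\xi)$ converges in distribution to $(\mathbf{Z}_s, -\tfrac12\|\mathbf{Z}_s\|^2)$ (as already noted in the excerpt, via a second-order Taylor expansion of the logarithms together with the CLT for quenched moments), for each fixed $\mathbf{u}$ the continuous mapping theorem makes $\exp(\boldsymbol{\alpha}_n^\xi \cdot \mathbf{u} + \beta_n^\xi)$ converge in distribution to $\exp(\mathbf{Z}_s \cdot \mathbf{u} - \tfrac12\|\mathbf{Z}_s\|^2)$. The two genuine tasks are therefore (i) upgrading convergence in distribution of the integrand to convergence of expectations (uniform integrability), and (ii) upgrading pointwise convergence in $\mathbf{u}$ to local uniform convergence.

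For (i), I would obtain a uniform-in-$n$ exponential moment bound on $\boldsymbol{\alpha}_n^\xi$ and $\beta_n^\xi$. The cleanest route: using $|f_m(s)| \le 1/\sqrt\delta$ and the definition $\qvec^\xi = N^{-1}\sum_x \fvec(\xi(x))$, one has $|\qvec^\xi \cdot \fvec(s_l)| \le C\|\qvec^\xi\|$, and on the (overwhelmingly likely) event $\|\qvec^\xi\| \le \tfrac12/C$ the logarithms $\log(1+\qvec^\xi\cdot\fvec(s_l))$ are well-defined with $|\log(1+x)| \le 2|x|$, giving $\|\boldsymbol{\alpha}_n^\xi\| \le C'\sqrt n\,\|\qvec^\xi\|$ and $\beta_n^\xi \le 0 + C''n\|\qvec^\xi\|^2$ after the second-order expansion (the first-order term in $\beta_n^\xi$ vanishes by $\sum_l p(s_l)\fvec(s_l)$-type orthogonality, and the quadratic term is $-\tfrac12 n\|\qvec^\xi\|^2 + o(\cdot)$, hence $\beta_n^\xi$ is bounded above). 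Since $\sqrt n\,\qvec^\xi \Rightarrow \mathcal{N}(\mathbf 0, sI_{k-1})$ and $\sqrt n\,\qvec^\xi$ is an average of i.i.d.\ bounded vectors scaled by $\sqrt{n/N}\to\sqrt s$, standard sub-Gaussian/Hoeffding estimates give $\sup_n \E_\xi[\exp(\theta\, n\|\qvec^\xi\|^2)] < \infty$ for small $\theta>0$ and $\sup_n\E_\xi[\exp(\theta'\sqrt n\|\qvec^\xi\|)]<\infty$ for all $\theta'$, and also control the negligible contribution of the bad event $\{\|\qvec^\xi\|>\tfrac12/C\}$ (there $|\beta_n^\xi|,\|\boldsymbol\alpha_n^\xi\|\le Cn$ but the probability is super-exponentially small in $n$). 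Combining, $\sup_{\mathbf u\in K}\sup_n\E_\xi[\exp(2(\boldsymbol\alpha_n^\xi\cdot\mathbf u+\beta_n^\xi))]<\infty$, which gives uniform integrability of $\{\exp(\boldsymbol\alpha_n^\xi\cdot\mathbf u+\beta_n^\xi)\}_n$ and hence $\rho_{t_n}(\mathbf u)\to\psi(\mathbf u)$ for each $\mathbf u$, with the same bound showing $\psi$ is finite and continuous on $K$.

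For (ii), I would note that both $\rho_{t_n}$ and $\psi$ are convex functions of $\mathbf{u}$ (each is an expectation of $\mathbf{u}\mapsto e^{\mathbf{a}\cdot\mathbf{u}+b}$, a convex function), so pointwise convergence of convex functions on the open set $\mathbb{R}^{k-1}$ automatically upgrades to locally uniform convergence by the standard convexity lemma (e.g.\ Rockafellar). Alternatively, and more self-containedly, the exponential-moment bound above yields a uniform-in-$n$ Lipschitz (indeed $C^1$) bound for $\rho_{t_n}$ on any compact $K$ via $\nabla\rho_{t_n}(\mathbf u)=\E_\xi[\boldsymbol\alpha_n^\xi\exp(\boldsymbol\alpha_n^\xi\cdot\mathbf u+\beta_n^\xi)]$ and Cauchy--Schwarz together with $\sup_n\E_\xi[\|\boldsymbol\alpha_n^\xi\|^2\exp(4\boldsymbol\alpha_n^\xi\cdot\mathbf u+4\beta_n^\xi)]<\infty$; equicontinuity plus pointwise convergence then gives uniform convergence on $K$ by Arzel\`a--Ascoli. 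I expect the main obstacle to be task (i): carefully justifying the uniform exponential-moment bound, in particular handling the rare environments $\xi$ on which some $1+\qvec^\xi\cdot\fvec(s_l)$ is small or negative so that the logarithm representation breaks down — this requires splitting the $\xi$-expectation on a good event and bounding the complement using a concentration inequality for $\|\qvec^\xi\|$, which is where Assumption~1 (through the uniform bound $|f_m(s)|\le1/\sqrt\delta$) is used once more.
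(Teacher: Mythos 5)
Your overall architecture matches the paper's: establish convergence of $\rho_{t_n}(\mathbf{u})=\E_\xi[\exp(\boldsymbol\alpha_n^\xi\cdot\mathbf{u}+\beta_n^\xi)]$ pointwise from the convergence in law of $(\boldsymbol\alpha_n^\xi,\beta_n^\xi)$ plus an integrability control, then upgrade to local uniformity via a gradient bound and Arzel\`a--Ascoli (your convexity shortcut via Rockafellar is a valid alternative for that last step). The difference, and the place where your argument has a genuine gap, is task (i). You split on the event $\{\|\qvec^\xi\|\le\epsilon\}$ and claim the complement is ``super-exponentially small in $n$'' and hence negligible. It is not: $\qvec^\xi$ is an average of $N=2^{t_n}\sim n/s$ bounded i.i.d.\ vectors, so Hoeffding gives only $\Prob(\|\qvec^\xi\|>\epsilon)\le Ce^{-c\epsilon^2 n/s}$, i.e.\ exponentially small in $n$; meanwhile on that event the only available bound on the exponent is $\beta_n^\xi\le n\sum_l p(s_l)\,\qvec^\xi\cdot\fvec(s_l)\le C'n/\delta$, so the integrand can be as large as $e^{C'n/\delta}$. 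The bad-event contribution is then $e^{(C'/\delta-c\epsilon^2/s)n}$, which vanishes only if $\epsilon^2\gtrsim s/\delta$ --- incompatible with the smallness of $\epsilon$ needed for your second-order Taylor expansion on the good event (e.g.\ when $s$ is large or $\delta$ small). As written, the uniform-integrability step does not close. (A minor point: $1+\qvec^\xi\cdot\fvec(s_l)=\hat p^\xi(s_l)/p(s_l)$ is never negative, only possibly zero, so the logarithm representation degenerates to $-\infty$ rather than breaking down; this is harmless for upper bounds.)

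The paper avoids the issue entirely with a deterministic observation that makes any good/bad splitting unnecessary: writing the exponent as $\boldsymbol\alpha_n^\xi\cdot\mathbf{u}+\beta_n^\xi=\sum_l p(s_l)\bigl(n+\sqrt{n}\,\fvec(s_l)\cdot\mathbf{u}\bigr)\log\bigl(1+\qvec^\xi\cdot\fvec(s_l)\bigr)$ and viewing it as a function of the variable $\qvec^\xi$, it is concave (for $n$ large and $\|\mathbf{u}\|\le A$ all coefficients are positive) with gradient vanishing at $\qvec^\xi=\mathbf{u}/\sqrt{n}$, thanks to $\sum_l p(s_l)\fvec(s_l)=0$. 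Hence $g_n(\mathbf{u},\cdot)$ is bounded by a constant $M=M(A)$ uniformly over \emph{all} environments $\xi$ and all large $n$, so $g_n(\mathbf{u},\cdot)=g_n(\mathbf{u},\cdot)\wedge M$ and convergence of expectations follows immediately from convergence in law of $(\boldsymbol\alpha_n^\xi,\beta_n^\xi)$ applied to a bounded continuous function. Your own completed-square computation ($\boldsymbol\alpha_n^\xi\cdot\mathbf{u}\lesssim\sqrt{n}\|\qvec^\xi\|\,\|\mathbf{u}\|$ against $\beta_n^\xi\approx-\tfrac12 n\|\qvec^\xi\|^2$) is essentially a Taylor-expanded shadow of this exact fact; recognizing it as a global concavity statement, valid for every $\xi$, is what closes the gap.
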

\begin{proof}
Let the function $g_n$ be defined as
\[
g_n(\mathbf{u},\qvec^\xi) := \exp\left( \boldsymbol{\alpha}_n^\xi \cdot \mathbf{u} +\beta_n^\xi  \right).
\]
We claim that for any $A>0,$
\begin{equation*}
\sup_{\|\mathbf{u}\| \le A} \limsup_{n \ge 1} \max \left\{ \|g_n( \mathbf{u}, \cdot)\|_{\infty}, \|\nabla_{\mathbf{u}} g_n( \mathbf{u}, \cdot )\|_{\infty} \right\} < \infty.
\end{equation*}
Indeed, computing the derivative in $\qvec^\xi$ of $\boldsymbol{\alpha}_n^\xi \cdot \mathbf{u} +\beta_n^\xi$ shows that it is maximized at $\qvec^\xi=\mathbf{u}/\sqrt{n}$ and from this, we deduce the bound on $\|g_n( \mathbf{u}, \cdot)\|_{\infty}.$ Regarding the gradient, observe that $\|\nabla_{\mathbf{u}} g_n( \mathbf{u},\qvec^\xi)\|_\infty \le \sup_{\|e\|_{\infty}\le 1} \|g_n(u+e,\qvec^\xi)\|_\infty.$
As a consequence of the claim, we deduce that for $\|u\|\le A$, the map $g_n(u, \cdot)$ coincides with $g_n(u, \cdot) \wedge M$ for some positive constant $M$. The latter is a continuous bounded function of $\boldsymbol{\alpha}_n^\xi$ and $\beta_n^\xi$, so the convergence in law implies that
\begin{equation*} 
\lim_{n\to\infty} \mathbb{E}_\xi[g_n(u,\qvec^\xi)] = \mathbb{E}_{\mathbf{Z}_s}[e^ {\mathbf{Z}_s \cdot \mathbf{u} -\frac{1}{2}\|\mathbf{Z}_s\|^2}].
\end{equation*}
The bound on the derivative in $u$ of $g_n(u, \qvec^\xi)$ proven above suffices to deduce that $u \mapsto  \mathbb{E}_\xi[g_n(u,\qvec^\xi)]$ is equicontinuous on $\|u\|\le A$. The Arzelà-Ascoli theorem guarantees that the convergence is uniform.

\end{proof}
\begin{corollary}
\label{cor:dist_conv_approx_density}
Let $F: \mathbb{R}_+ \to \mathbb{R}$ be a bounded, Lipschitz continuous function. Then,
\[
\lim_{n\to\infty} \int \left| F(\rho_{t_n}(\bar{\mathbf{f}}_n(\sigma))) - F(\psi(\bar{\mathbf{f}}_n(\sigma))) \right| \, \pi(d\sigma) = 0.
\]
\end{corollary}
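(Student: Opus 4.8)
The plan is to combine the local uniform convergence established in Lemma~\ref{lem:local_uniform_convergence} with a tail estimate for the stationary fluctuation vector $\bar{\mathbf{f}}_n(\sigma)$ that is uniform in $n$. Write $L$ for the Lipschitz constant of $F$ and note that, since $F$ is also bounded, one has $|F(a)-F(b)| \le \min\{L|a-b|,\,2\|F\|_\infty\}$ for all $a,b\in\R_+$; in particular the integrand is a well-defined bounded measurable function of $\sigma$, because both $\rho_{t_n}$ and $\psi$ take values in $\R_+$ (being expectations of positive quantities). The key uniform estimate comes from the orthonormality of the basis: under $\pi$ the variables $f_m(\sigma_i)$ are independent with mean $0$ (for $m\ge1$) and unit variance, so
\[
\E_\pi\!\left[\|\bar{\mathbf{f}}_n(\sigma)\|^2\right] = \frac{1}{n}\sum_{i,j=1}^n\sum_{m=1}^{k-1}\E_\pi[f_m(\sigma_i)f_m(\sigma_j)] = k-1
\]
for every $n$, whence Markov's inequality gives $\pi\big(\|\bar{\mathbf{f}}_n(\sigma)\| > A\big) \le (k-1)/A^2$ uniformly in $n$ and $A>0$.

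Now fix $\epsilon > 0$, choose $A = A(\epsilon)$ so large that $2\|F\|_\infty(k-1)/A^2 < \epsilon/2$, and let $K := \{\mathbf{u}\in\R^{k-1} : \|\mathbf{u}\|\le A\}$, a compact set. Split the integral according to whether $\bar{\mathbf{f}}_n(\sigma)\in K$. On the complement we use the crude bound $2\|F\|_\infty$ together with the tail estimate above, so this part contributes at most $2\|F\|_\infty\,\pi(\bar{\mathbf{f}}_n(\sigma)\notin K) < \epsilon/2$, for every $n$. On the event $\{\bar{\mathbf{f}}_n(\sigma)\in K\}$ we use the Lipschitz bound and Lemma~\ref{lem:local_uniform_convergence}:
\[
\big|F(\rho_{t_n}(\bar{\mathbf{f}}_n(\sigma))) - F(\psi(\bar{\mathbf{f}}_n(\sigma)))\big| \le L\sup_{\mathbf{u}\in K}|\rho_{t_n}(\mathbf{u}) - \psi(\mathbf{u})| =: L\,\eta_n,
\]
where $\eta_n\to0$ by the lemma; since $\pi$ is a probability measure this part contributes at most $L\,\eta_n$, which is $< \epsilon/2$ for all $n\ge n_0(\epsilon)$. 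Adding the two bounds shows that the integral in the statement is at most $\epsilon$ for all $n\ge n_0(\epsilon)$, and since $\epsilon$ was arbitrary the limit is $0$.

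I do not anticipate a genuine obstacle here: the argument is the standard ``local uniform convergence plus uniform tightness'' scheme. The only point demanding a little care is that the tail bound $\pi(\|\bar{\mathbf{f}}_n(\sigma)\|>A)$ be uniform in $n$, which is exactly why I use the exact second-moment identity above rather than merely the weak convergence $\bar{\mathbf{f}}_n(\sigma)\Rightarrow\mathcal N(\mathbf 0,I_{k-1})$ (the latter, combined with Prokhorov tightness, would also suffice but is less direct). One should also keep in mind that $\rho_{t_n}$ and $\psi$ are $\R_+$-valued so the compositions with $F$ are legitimate, but this is immediate from their definitions.
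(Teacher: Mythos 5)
Your proof is correct and follows essentially the same route as the paper: split the integral according to whether $\|\bar{\mathbf{f}}_n(\sigma)\|\le A$, use Lipschitz continuity of $F$ together with the locally uniform convergence of Lemma~\ref{lem:local_uniform_convergence} on the compact part, and use boundedness of $F$ plus a tail bound uniform in $n$ on the complement. The only (immaterial) difference is that you obtain the tail estimate from the exact second-moment identity $\E_\pi[\|\bar{\mathbf{f}}_n(\sigma)\|^2]=k-1$ via Chebyshev, whereas the paper invokes Hoeffding's inequality to get an exponential bound $C e^{-cA^2}$; either suffices here.
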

With Corollary~\ref{cor:dist_conv_approx_density} at our disposal, we obtain the distributional convergence \\$\rho_{t_n}(\bar{\mathbf{f}}_n(\sigma))  \xrightarrow{d} \psi(\mathbf{Z})$. This is because the corollary implies that it suffices to determine the limit of $\mathbb{E}[F(\psi(\bar{\mathbf{f}}_n(\sigma)))]$. Since the composite function $F \circ \psi$ is bounded and continuous, and $\bar{\mathbf{f}}_n(\sigma) \xrightarrow{d} \mathbf{Z} \sim \mathcal{N}(\mathbf{0}, I_{k-1})$ by the Central Limit Theorem, the Continuous Mapping Theorem yields the required limit:
\[
\lim_{n\to\infty} \E[F(\psi(\bar{\mathbf{f}}_n(\sigma)))] = \E[F(\psi(\mathbf{Z}))].
\]

\begin{proof}[Proof of Corollary~\ref{cor:dist_conv_approx_density}]
It suffices to show that for any given $A>0$
\begin{align*}
    \limsup_{n \rightarrow \infty} \int \left| F(\rho_{t_n}(\bar{\mathbf{f}}_n(\sigma))) - F(\psi(\bar{\mathbf{f}}_n(\sigma))) \right| \mathbf{1}_{\{\|\bar{\fvec}_n(\sigma)\| \le A\}} \, \pi(d\sigma)=0,
\end{align*}
and that 
\begin{align*}
    \lim_{A \rightarrow \infty}\limsup_{n \rightarrow \infty} \int \left| F(\rho_{t_n}(\bar{\mathbf{f}}_n(\sigma))) - F(\psi(\bar{\mathbf{f}}_n(\sigma))) \right| \mathbf{1}_{\{\|\bar{\fvec}_n(\sigma)\| > A\}} \, \pi(d\sigma)=0.
\end{align*}
 The first limit is zero due to the Lipschitz continuity of $F$ and the local uniform convergence of $\rho_{t_n}$ established in Lemma~\ref{lem:local_uniform_convergence}. For the second limit, the integrand is uniformly bounded since $F$ is bounded, while the measure of the domain, $\pi(\|\bar{\fvec}_n(\sigma)\| > A)$, is bounded by a term of the form $C\exp(-c A^2)$ via Hoeffding's inequality.
\end{proof}

\appendix
\section{Proof of Lemmas \ref{lem:graphical_construction} and \ref{lem:stationary_measure}}

In this appendix, we provide the proof of Lemmas \ref{lem:graphical_construction} and \ref{lem:stationary_measure} regarding the ergodicity of the system for the self-containedness of the article. 
 
 \begin{proof}[Proof of Lemma \ref{lem:graphical_construction}]
    
    We start by observing that for every $t \in \mathbb{N}$, the distribution $\mu_t$ is given by the average over all partitions of $[n]$ into $N=2^t$ disjoint sets (which are allowed to be empty):
    \[
    \mu_t = \frac{1}{N^n} \sum_{(A_1, \dots, A_N)} \mu_{A_1} \otimes \mu_{A_2} \otimes \dots \otimes \mu_{A_N},
    \]
    where the sum is taken over all $N^n$ such partitions. The case $t=1$ is the definition of uniform recombination. The case $t \ge 2$ follows by induction, since if $(A_1, \dots, A_N)$ and $(A'_1, \dots, A'_N)$ are two independent uniformly random partitions of $[n]$ into $N$ disjoint sets, and $A \subset [n]$ is an independent and uniformly random subset, then
    \[
    (A_1 \cap A, \dots, A_N \cap A, A'_1 \cap A^c, \dots, A'_N \cap A^c)
    \]
    yields a uniformly random partition of $[n]$ into $2N$ disjoint sets.

    A way to sample such a partition uniformly at random is to assign to each coordinate $i \in [n]$ an independent and uniformly random number $U_i \in \{1, \dots, N\}$ and define the sets as $A_x := \{i \in [n] : U_i = x\}$ for $x = 1, \dots, N$. With this definition, $\mu_t$ becomes an expectation:
    \[
    \mu_t = \mathbb{E} \left[ \mu_{A_1} \otimes \mu_{A_2} \otimes \dots \otimes \mu_{A_N} \right].
    \]
    Let $\xi(x)$ for $x \in \{1, \dots, N\}$ be $N$ independent random configurations, each with law $\mu$. For any configuration $\sigma \in \Omega_n$, we can compute its probability using Fubini's theorem:
    \begin{align*}
\mu_t(\sigma) &= \mathbb{E}_{(U_i)} \left[ \prod_{x=1}^N \mu_{A_x}(\sigma) \right] \\
&= \mathbb{E}_{(U_i)} \left[ \mathbb{P}_{\xi} \left( \forall x \in \{1, \dots, N\}, \forall i \in A_x, \xi_i(x) = \sigma_i \right) \right] \\
&= \mathbb{E}_{\xi} \left[ \mathbb{P}_{(U_i)} \left( \forall x \in \{1, \dots, N\}, \forall i \in A_x, \xi_i(x) = \sigma_i \right) \right] \\
&= \mathbb{E}_{\xi} \left[ \mathbb{P}_{(U_i)} \left( \forall i \in [n], \xi_i(U_i) = \sigma_i \right) \right].
\end{align*}
    The right-hand side is just the law of the random configuration $\sigma^* = (\xi_1(U_1), \dots, \xi_n(U_n))$ by definition. This completes the proof.
    \end{proof}

    \begin{proof}[Proof of Lemma \ref{lem:stationary_measure}]
We first show that $\pi$ is stationary under the recombination operator $\circ$. By definition of the operator:
\[
    \pi \circ \pi = 2^{-n} \sum_{A \subseteq [n]} (\pi)_A \otimes (\pi)_{A^c}.
\]
Since $\pi$ is a product measure, its marginal on a coordinate subset $A$ is simply the product of the single-site marginals over that subset:
\[
    (\pi)_A = \bigotimes_{i \in A} p_i \quad \text{and} \quad (\pi)_{A^c} = \bigotimes_{j \in A^c} p_j.
\]
The product of these two marginals reconstructs the original product measure over all coordinates:
\[
    (\pi)_A \otimes (\pi)_{A^c} = \left( \bigotimes_{i \in A} p_i \right) \otimes \left( \bigotimes_{j \in A^c} p_j \right) = \bigotimes_{k \in [n]} p_k = \pi.
\]
Since every term in the summation is equal to $\pi$, and there are $2^n$ subsets $A \subseteq [n]$, the expression becomes:
\[
    \pi \circ \pi = 2^{-n} \sum_{A \subseteq [n]} \pi = 2^{-n} (2^n \pi) = \pi.
\]
Thus, $\pi$ is a fixed point of the dynamics.

To prove that $\mu_t \to \pi$ as $t \to \infty$, we explicitly define the underlying fragmentation process. Let $(\mathcal{A}_t)_{t \ge 0}$ be a sequence of random partitions of $[n]$.
    \begin{itemize}
        \item At time $t=0$, the partition is trivial: $\mathcal{A}_0 = \{[n]\}$.
        \item For $t \ge 0$, the partition $\mathcal{A}_{t+1}$ is obtained from $\mathcal{A}_t$ as follows: Let $A \subseteq [n]$ be a new, independent, uniformly random subset. Every block $B \in \mathcal{A}_t$ is split into two new blocks, $B \cap A$ and $B \cap A^c$. Thus, $\mathcal{A}_{t+1} = \{ B \cap A, B \cap A^c \mid B \in \mathcal{A}_t \}$.
    \end{itemize}
    At each time $t$, this process yields a uniformly random partition of $[n]$ into $2^t$ subsets (some of which may be empty). This process is equivalent to the graphical construction where each coordinate $i \in [n]$ is assigned an independent, uniformly random leaf $U_i \in \{1, \dots, 2^t\}$. The resulting partition is $\mathcal{A}_t = \{A_x\}_{x=1}^{2^t}$ where $A_x = \{i \in [n] : U_i = x\}$.
    
    Define the \textit{fragmentation time} $\tau_{\text{frag}}$ as the first time $t$ at which $\mathcal{A}_t$ separates all pairs of coordinates. That is,
    \[
    \tau_{\text{frag}} = \min\{t \ge 0 \mid \forall i \neq j \in [n], \exists B_1, B_2 \in \mathcal{A}_t, B_1 \neq B_2 \text{ s.t. } i \in B_1, j \in B_2 \}.
    \]
    Conditionally on the event $\{\tau_{\text{frag}} \le t\}$, the configuration $\sigma^*$ at the root of the tree has the law $\pi$. This is because if all coordinates are in separate blocks of the partition, they are assigned spins from independently chosen leaves, making the resulting spins $\sigma_i^*$ mutually independent. The law of $\sigma^*$ is thus the product of its marginals, which is $\pi$.
    
    The total variation distance can be bounded by the probability of the complementary event:
    \[
    \|\mu_t - \pi\|_{\TV} \le \p(\tau_{\text{frag}} > t).
    \]
    The event $\{\tau_{\text{frag}} > t\}$ occurs if there exists at least one pair of coordinates $(i, j)$ with $i \neq j$ that has not been separated by time $t$. At any single step of the process, a pair $(i, j)$ is separated if they fall into different parts of the random bipartition defined by $A$ and $A^c$, an event of probability $1/2$. The probability they remain unseparated after one step is $1/2$. After $t$ independent steps, the probability that they are still in the same block of the partition $\mathcal{A}_t$ is $(1/2)^t = 2^{-t}$.
    By the union bound over all $\binom{n}{2}$ pairs of coordinates:
    \[
    \p(\tau_{\text{frag}} > t) \le \sum_{1 \le i < j \le n} \p(\{i, j\} \text{ are not separated by time } t) = \binom{n}{2} 2^{-t}.
    \]
    As $t \to \infty$, this upper bound converges to 0. Therefore, $\|\mu_t - \pi\|_{\TV} \to 0$, which proves convergence. 
    
    The uniqueness is a direct consequence of the convergence itself; for if another stationary measure $\pi_s \neq \pi$ existed, the evolution starting from $\mu_0 = \pi_s$ would remain constant at $\pi_s$, contradicting the proven result that every evolution converges to the single limit $\pi$.
    
    \end{proof}

\noindent\textbf{Acknowledgment.} I. Seo and J. Kim were supported by the National Research Foundation of Korea (NRF) grant funded by the Korean government (MEST) No. RS-2023-NR076621 and RS-2025-23525546. 

\noindent\textbf{Conflict of Interest}
The authors declare no competing interests.

\noindent\textbf{Data Availability Statement}
Data sharing is not applicable to this article as no datasets were generated or analysed during the current study.

\bibliographystyle{plain}  
\bibliography{ref}

\end{document}